\documentclass[12pt]{amsart}
\usepackage{amssymb,latexsym,amsthm,mathptmx}
\usepackage{amsmath,amsfonts,amssymb}
\usepackage{graphicx}
\usepackage{color}
\usepackage{mathrsfs}

\newcommand\R{\mathbb R}
\newcommand\C{\mathbb C}

\newcommand\D{\mathbb D}

\newcommand{\F}{\frak{F}}
\newcommand{\T}{\mathbb{T}}
\newcommand{\TT}{\mathfrak{T}}

\newcommand{\im}{\operatorname{Im}}

\newcommand{\ch}{\operatorname{Ch}}




\newtheorem{theorem}{Theorem}[section]
\newtheorem{lemma}[theorem]{Lemma}
\newtheorem{cor}[theorem]{Corollary}
\newtheorem{prop}[theorem]{Proposition}

\theoremstyle{definition}
\newtheorem{definition}[theorem]{Definition}
\newtheorem{example}[theorem]{Example}

\theoremstyle{remark}

\begin{document}
\author{
Osamu~Hatori
}
\address{
Institute of Science and Technology,
Niigata University, Niigata 950-2181, Japan
}
\email{hatori@math.sc.niigata-u.ac.jp
}



\title[]
{The Mazur-Ulam property for uniform algebras}

\keywords{Tingley's problem, the Mazur-Ulam property, surjective isometries, uniform algebras, maximal convex sets, analytic functions}

\subjclass[2020]{46B04, 46B20, 46J10, 46J15
}


\begin{abstract}
We give a sufficient condition for a Banach space with which the homogeneous extension of a surjective isometry from the unit sphere of it onto another one is real-linear. The condition is satisfied by a uniform algebra and a certain extremely $C$-regular space of real-valued continuous functions.
\end{abstract}
\maketitle
\section{Introduction}\label{sec1}
In 1987 Tingley \cite{tingley}  proposed a problem if a surjective isometry between the unit spheres of Banach spaces is extended to a surjective isometry between whole spaces. 
Wang \cite{wang} seems to be the first to solve Tingley's problem between specific spaces. He dealt with $C_0(Y)$, the space of all real (resp. complex) valued continuous functions which vanish at infinity on a locally compact Hausdorff space $Y$.  Although we do not exhibit each of the literatures, a considerable number of interesting  results have shown that Tingley's problem has an affirmative answer.
No counterexample is known.
Due to \cite[p.730]{yangzhao} Ding was the first to consider Tingley's problem between different type of spaces \cite{ding2003B}.  Ding 
\cite[Corollary 2]{ding2007} in fact proved that the real Banach space of all null sequences of real numbers satisfies now we call the Mazur-Ulam property. Later
Cheng and Dong \cite{chengdong} introduced the concept of the Mazur-Ulam property. 
Following Cheng and Dong we say that a real Banach space $E$ satisfies the Mazur-Ulam property if a surjective isometry between the unit sphere of $E$ and that of any real Banach space is extended to a surjective real-linear isometry between the whole spaces. 
Tan \cite{tan2011a,tan2011b,tan2012a} showed that the space $L^p(\R)$ for $\sigma$-finite positive measure space satisfies the Mazur-Ulam property. 
In \cite{thl2013} Tan, Huang and Liu introduced the notion of generalized lush spaces and local GL spaces and proved that every local GL space satisfies the Mazur-Ulam property.
New achievements by Mori and Ozawa \cite{moriozawa} prove that the Mazur-Ulam  property  is satisfied by unital $C^*$-algebras and real von Neumann algebras. 
Cueto-Avellaneda and Peralta \cite{cp2019} proved that a complex (resp. real) Banach space of all continuous maps with the value in a complex (resp. real) Hilbert space satisfies the Mazur-Ulam property (cf. \cite{cp2020}).
The result proving that all general JBW*-triples satisfy the Mazur-Ulam property is established by 
Becerra-Guerrero, Cueto-Avellaneda, Fern\'andez-Polo and Peralta \cite{bcfp}  and 
Kalenda and Peralta \cite{kp}. 
The study
of the Mazur-Ulam property  is nowadays a
challenging subject of study (cf. \cite{banakh,cabellosanchez2019,cp2020,wh2019}). 

In this paper we say that a complex Banach space $B$ satisfies the {\em complex  Mazur-Ulam property}, emphasizing the term `complex',  if a surjective isometry  between the unit spheres of  $B$ and any complex Banach space is extended to a surjective real-linear isometry between the whole spaces. 
Jim\'enez-Vargas, Morales-Compoy, Peralta and Ram\'irez \cite[Theorems 3.8, 3.9]{jmpr2019}  probably provides  the first examples of complex Banach spaces  satisfying the complex Mazur-Ulam property (cf. \cite{peralta2019a}).
 Note that  a complex Banach space satisfies the complex Mazur-Ulam property provided that it satisfies the Mazur-Ulam property as a real Banach space (a complex Banach space is a real Banach space simultaneously). 


In \cite{HOST} we proved that a surjective isometry between the unit spheres of uniform algebras is extended to a surjective real-linear isometry between whole of the uniform algebras. In this paper we show the complex Mazur-Ulam property for uniform algebras. 
Typical examples of a uniform algebra  consist of analytic functions of one and several complex-variables such as the disk algebra, the polydisk algebra and the ball algebra. Through the Gelfand transform, the algebra of all bounded analytic functions on a certain domain is considered as a uniform algebra on the maximal ideal space. 
Hence the main result in this paper provides the first example of a Banach space of analytic functions which satisfies the complex Mazur-Ulam property. 
  For further information about uniform algebras, see \cite{br}.

\section{Is a homogeneous extension linear?}\label{sec2}
For a real or complex Banach space $B$, we denote $S(B)$ the unit sphere $\{a\in B:\|a\|=1\}$ of $B$. A maximal convex subset of $S(B)$ is denoted by $\F_B$.
Throughout the paper the map $T:S(B_1)\to S(B_2)$  always denotes a surjective isometry with respect to the metric induced by the norm, where
$B_1$ and $B_2$ are both real Banach spaces or both complex Banach spaces.  
We define the homogeneous extension $\widetilde{T}:B_1\to B_2$ of $T$ by
\begin{equation*}
\widetilde{T}(a)=
\begin{cases}
\|a\|T\left(\frac{a}{\|a\|}\right), & 0\ne a\in B_1 \\
0,& a=0.
\end{cases}
\end{equation*}
By the definition $\widetilde{T}$ is a bijection which satisfies that $\|\widetilde{T}(a)\|=\|a\|$ for every $a\in B_1$. The Tingley's problem asks if $\widetilde{T}$ is real-linear or not. In this paper we prove that $\widetilde{T}$ is real-linear for certain Banach spaces $B_1$ including uniform algebras.

It is well known that for every $F\in \F_B$ of a real or complex Banach space $B$, there exists an extreme point $p$ in the closed unit ball $B(B^*)$ of the dual space $B^*$ of $B$ such that $F=p^{-1}(1)\cap S(B)$ (cf. \cite[Lemma 3.3]{tanaka}, \cite[Lemma 3.1]{HOST}). 
Let $Q$ be the set of all extreme points $p$ in $B(B^*)$ such that $p^{-1}(1)\cap S(B)\in \F_B$. We define an equivalence relation $\sim$ in $Q$.
We write $\T=\{z\in {\mathbb{C}}:|z|=1\}$ if $B$ is a complex Banach space, where $\C$ denotes the space of all complex numbers, and $\T=\{\pm 1\}$ if $B$ is a real Banach space.

From Definition \ref{def1} through Definition \ref{def5} $B$ is a real or complex Banach space.
\begin{definition}\label{def1}
Let  $p_1,p_2\in Q$. We denote $p_1\sim p_2$ if there exits $\gamma \in \T$ such that $p_1^{-1}(1)\cap S(B)=(\gamma p_2)^{-1}(1)\cap S(B)$.
\end{definition}
Note that $\gamma p\in Q$ provided that $\gamma\in \T$ and $p\in Q$.
\begin{lemma}\label{lemma2}
The binary relation $\sim$ is an equivalence relation in $Q$.
\end{lemma}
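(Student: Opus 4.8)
The plan is to verify reflexivity, symmetry and transitivity directly, using only that $\T$ is a multiplicative group and that multiplication by a scalar $\gamma$ with $|\gamma|=1$ is a surjective isometry of $B$ onto itself, hence carries $S(B)$ onto $S(B)$.

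The one computation to isolate is the following. For $p\in Q$ write $F(p)=p^{-1}(1)\cap S(B)$. If $\gamma\in\T$ and $a\in S(B)$, then $(\gamma p)(a)=1$ if and only if $p(a)=\overline{\gamma}$, and since $\|\gamma a\|=\|a\|=1$ this holds if and only if $p(\gamma a)=1$, i.e. $\gamma a\in F(p)$. Thus $a\in F(\gamma p)\iff \gamma a\in F(p)$, that is, $F(\gamma p)=\overline{\gamma}\,F(p)$, where $\overline{\gamma}\,A=\{\overline{\gamma}x:x\in A\}$ (in the real case $\overline{\gamma}=\gamma$). In particular $F(\gamma p)\in\F_B$, recovering the remark made just before the lemma, and $p_1\sim p_2$ holds precisely when $F(p_1)=\delta\,F(p_2)$ for some $\delta\in\T$ (take $\delta=\overline{\gamma}$).

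With this reformulation the three properties are immediate: reflexivity uses $\delta=1$; if $F(p_1)=\delta F(p_2)$ then $F(p_2)=\overline{\delta}F(p_1)$ with $\overline{\delta}\in\T$, giving symmetry; and if $F(p_1)=\delta_1F(p_2)$ and $F(p_2)=\delta_2F(p_3)$ then $F(p_1)=\delta_1\delta_2F(p_3)$ with $\delta_1\delta_2\in\T$, giving transitivity. Equivalently, in the notation of Definition~\ref{def1}, a witness $\gamma$ for $p_1\sim p_2$ yields the witness $\overline{\gamma}$ for $p_2\sim p_1$, and witnesses $\gamma_{12},\gamma_{23}$ for $p_1\sim p_2$ and $p_2\sim p_3$ yield the witness $\gamma_{12}\gamma_{23}$ for $p_1\sim p_3$. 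There is essentially no obstacle here; the only point requiring care is keeping the complex conjugates straight in the identity $F(\gamma p)=\overline{\gamma}F(p)$, and observing that the set equalities in Definition~\ref{def1} are to be checked through the sphere bijection $a\mapsto\gamma a$ rather than via any (unnecessary) uniqueness of the extreme functional representing a given maximal convex set.
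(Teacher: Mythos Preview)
Your proof is correct. The paper itself omits the proof entirely, stating only that it ``is by a routine argument and is omitted''; your verification of reflexivity, symmetry and transitivity via the identity $F(\gamma p)=\overline{\gamma}\,F(p)$ is precisely the routine argument the author had in mind.
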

A proof is by a routine argument and is omitted.
\begin{definition}\label{def3}
A set of all representatives with respect to the equivalence relation $\sim$ is 
 simply called a set of representatives for $\F_B$.
\end{definition}
Note that a set of representatives exists due to the choice axiom. Note also that a set of representatives for $\F_B$ is a norming family for $B$, hence it is a uniqueness set for $B$.
\begin{example}\label{examp3}
Let  
$A$ be a uniform algebra. We assume a uniform algebra as a complex Banach space here and after.
We denote the choquet boundary for $A$ by $\ch (A)$. By the Arens-Kelley theorem (cf. \cite[p.29]{fj1}) we have $Q=\{\gamma \delta_x: x\in \ch(A),\,\,\gamma \in \T\}$, where $\delta_x$ denotes the point evaluation at $x$. In this case $\{\delta_x: x\in \ch(A)\}$ is a set of representatives for $\F_A$.
\end{example}

\begin{lemma}\label{lemma4}
Let $P$ be a set of representatives. For $F\in \F_B$  there exists a unique $(p,\lambda)\in P\times \T$ such that $F=\{a\in S(B):p(a)=\lambda\}$. Conversely, for $(p,\lambda)\in P\times \T$ we have $\{a\in S(B): p(a)=\lambda\}$ is in $\F_B$.
\end{lemma}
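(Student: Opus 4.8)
The plan is to reduce all three assertions to bookkeeping with the equivalence relation $\sim$ on $Q$, invoking three things already at hand: the quoted fact that every $F\in\F_B$ equals $p^{-1}(1)\cap S(B)$ for some extreme point $p$ of $B(B^*)$ (so that $p\in Q$); the observation recorded after Definition~\ref{def1} that $\gamma p\in Q$ whenever $\gamma\in\T$ and $p\in Q$; and the defining property of a set of representatives, namely that $P\subseteq Q$ meets each $\sim$-class exactly once. Throughout I would use that for $p\in B^*$ and $\mu\in\T$ one has $p(\mu a)=\mu p(a)$, hence $(\mu p)^{-1}(1)\cap S(B)=\{a\in S(B):p(a)=\overline\mu\}$; this handles the complex and real cases uniformly.

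For existence, given $F\in\F_B$ I would write $F=p^{-1}(1)\cap S(B)$ with $p$ an extreme point of $B(B^*)$, whence $p\in Q$. Let $p_0\in P$ be the representative of the $\sim$-class of $p$. By Definition~\ref{def1} there is $\gamma\in\T$ with $p^{-1}(1)\cap S(B)=(\gamma p_0)^{-1}(1)\cap S(B)$, and the right-hand side is $\{a\in S(B):p_0(a)=\overline\gamma\}$. Thus $(p_0,\overline\gamma)\in P\times\T$ represents $F$ in the required form.

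For uniqueness, suppose $(p,\lambda)$ and $(p',\lambda')$ in $P\times\T$ both satisfy $F=\{a\in S(B):p(a)=\lambda\}=\{a\in S(B):p'(a)=\lambda'\}$. Rewriting, $F=(\overline\lambda p)^{-1}(1)\cap S(B)=(\overline{\lambda'}p')^{-1}(1)\cap S(B)$; by the recorded observation $\overline\lambda p$ and $\overline{\lambda'}p'$ lie in $Q$, and this displayed equality is precisely the relation $\overline\lambda p\sim\overline{\lambda'}p'$ (take $\gamma=1$ in Definition~\ref{def1}). Since also $p\sim\overline\lambda p$ (take $\gamma=\lambda$, using $\lambda\cdot\overline\lambda p=p$) and similarly $p'\sim\overline{\lambda'}p'$, transitivity from Lemma~\ref{lemma2} gives $p\sim p'$, and therefore $p=p'$ because $P$ contains only one member of each class. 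Finally $\lambda=\lambda'$ follows by evaluating: a maximal convex subset of $S(B)$ is nonempty (each singleton of $S(B)$ is convex), so choosing $a\in F$ gives $\lambda=p(a)=p'(a)=\lambda'$.

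For the converse, given $(p,\lambda)\in P\times\T$ the recorded observation yields $\overline\lambda p\in Q$, so by the definition of $Q$ the set $(\overline\lambda p)^{-1}(1)\cap S(B)$ belongs to $\F_B$; and that set is exactly $\{a\in S(B):p(a)=\lambda\}$. The only delicate point is the uniqueness step, where one must resist identifying $p$ with $\overline\lambda p$ prematurely: the safe route is to push each pair into $Q$ via $p\mapsto\overline\lambda p$, apply the one-representative-per-class property there to match up the functionals, and only afterwards compare the unimodular constants.
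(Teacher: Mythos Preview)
Your proof is correct and follows essentially the same route as the paper: write $F=p^{-1}(1)\cap S(B)$ for $p\in Q$, pass to the representative in $P$ via $\sim$ to get existence, and for uniqueness reduce to $p\sim p'$ and then invoke that $P$ meets each class once. The only cosmetic differences are that the paper reaches $p\sim p'$ in a single step (with witness $\gamma=\lambda\overline{\lambda'}$) rather than via transitivity through $\overline\lambda p$ and $\overline{\lambda'}p'$, and the paper leaves implicit the nonemptiness argument you spell out for $\lambda=\lambda'$.
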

\begin{proof}
Let $F\in \F_B$. We first prove the existence of $(p,\lambda)\in P\times \T$ satisfying the condition. 
There exists $p\in Q$ such that $F=p^{-1}(1)\cap S(B)$. By the definition, there exists $q\in P$ such that $p\sim q$. Hence there exists $\gamma \in \T$ such that $F=(\gamma q)^{-1}(1)\cap S(B)$. Letting $\lambda=\overline{\gamma}$ we have $F=\{a\in S(B):q(a)=\lambda\}$.  
We prove the uniqueness of $(q,\lambda)$. Suppose that $\{a\in S(B):q(a)=\lambda\}=\{a\in S(B):q'(a)=\lambda'\}$ for $(q', \lambda')\in P\times \T$. Then we have $q^{-1}(1)\cap S(B)=(\lambda\overline{\lambda'}q')^{-1}\cap S(B)$, that is $q\sim q'$. As $P$ is the set of all representatives with respect to the equivalence relation $\sim$ and $q,q'\in P$ we have $q=q'$. It follows that $\lambda=\lambda'$.

Conversely, let $(p,\lambda)\in P\times \T$. Then $\overline{\lambda}p\in Q$ and $\{a\in S(B):p(a)=\lambda\}=(\overline{\lambda}p)^{-1}(1)\cap S(B)\in \F_B$. 
\end{proof}
\begin{definition}\label{def5}
For $(q,\lambda)\in  Q\times \T$, we denote $F_{q,\lambda}=\{a\in S(B):q(a)=\lambda\}$. A map 
\[
I_B:\F_B\to P\times \T
\]
is defined by $I_B(F)=(p,\lambda)$ for $F=F_{p,\lambda}\in \F_B$.
\end{definition}
By Lemma \ref{lemma4} the map $I_B$ is well defined and bijective. An important theorem of Cheng, Dong and Tanaka states that a surjective isometry between the unit spheres of Banach spaces preserves  maximal convex subsets of the unit spheres. This theorem was first exhibited by Cheng and Dong in \cite[Lemma 5.1]{chengdong} and a crystal proof was given by Tanaka \cite[Lemma 3.5]{tanaka2014b}.

In the following  $P_j$ is a set of representatives for $\F_{B_j}$ for $j=1,2$. 
Due to the theorem of Cheng, Dong and Tanaka a bijection $\TT:\F_{B_1}\to \F_{B_2}$ is induced.
\begin{definition}\label{def7}
The map $\TT:\F_{B_1}\to \F_{B_2}$ is defined by $\TT(F)=T(F)$ for $F\in \F_{B_1}$. 
The map $\TT$ is well defined and bijective. Put 
\[
\Psi=I_{B_2}\circ \TT\circ I_{B_1}^{-1}:P_1\times \T \to P_2\times \T.
\]
Define two maps
\[
\phi:P_1\times \T \to P_2
\]
and
\[
\tau:P_1\times \T \to \T
\]
by 
\begin{equation}\label{psi}
\Psi(p,\lambda)=(\phi(p,\lambda), \tau(p,\lambda)), \,\,(p,\lambda)\in P_1\times \T.
\end{equation}
If $\phi(p,\lambda)=\phi(p,\lambda')$ for every $p\in P_1$ and $\lambda, \lambda'\in \T$ we simply write $\phi(p)$ instead of $\phi(p,\lambda)$ by discarding the second term $\lambda$.
\end{definition}
Rewriting the equation \eqref{psi} we have
\begin{equation}\label{fundamental}
T(F_{p,\lambda})=F_{\phi(p,\lambda),\tau(p,\lambda)},\quad (p,\lambda)\in P_1\times \T.
\end{equation}
We point out that 
\begin{equation}\label{-}
\phi(p,-\lambda)=\phi(p,\lambda), \,\,
\tau(p,-\lambda)=-\tau(p,\lambda)
\end{equation}
for every $(p,\lambda)\in P_1\times \T$.
The reason is as follows. It is well known that $T(-F)=-T(F)$ for every $F\in \F_{B_1}$ (cf. \cite[Proposition 2.3]{mori}). Hence
\begin{multline*}
F_{\phi(p,-\lambda),\tau(p,-\lambda)}=T(F_{p,-\lambda})=T(-F_{p,\lambda})=-T(F_{p,\lambda}) \\
=-F_{\phi(p,\lambda),\tau(p,\lambda)}=F_{\phi(p,\lambda),-\tau(p,\lambda)}
\end{multline*}
for every $p\in P_1$. Since the map $I_{B_2}$ is a bijection we have \eqref{-}.

Rewriting \eqref{fundamental} 
we get  a basic equation in our argument :
\begin{equation}\label{important}
\phi(p,\lambda)(T(a))=\tau(p,\lambda),\quad a\in F_{p,\lambda}.
\end{equation}
We will prove that under some condition on $B_1$, which we will exhibit explicitly later,  we have that 
\[
\phi(p,\lambda)=\phi(p,\lambda'), \quad p\in P_1
\]
for every $\lambda$ and $\lambda'$ in $\T$, and 
\[
\tau(p,\lambda)=\tau(p,1)\times
\begin{cases}
\text{$\lambda$, \quad for some $p\in P_1$}
\\
\text{$\overline{\lambda}$, \quad for other $p$'s}
\end{cases}
\]
for $\lambda\in \T$.
We get, under some condition on $B_1$, that
\begin{equation}\label{then}
\phi(p)(T(a))=\tau(p,1)\times
\begin{cases}
\text{$p(a)$, \quad for some $p\in P_1$}
\\
\text{$\overline{p(a)}$, \quad for other $p$'s}
\end{cases}
\end{equation}
for $a\in F_{p,p(a)}$. If the equation \eqref{then} holds for any $a\in S(B_1)$, without the restriction that $a\in F_{p,p(a)}$, then applying the definition of $\widetilde{T}$ we get
\begin{equation}\label{thenthen}
\phi(p)(\widetilde{T}(a))=\tau(p,1)\times
\begin{cases}
\text{$p(a)$, \quad for some $p\in P_1$}
\\
\text{$\overline{p(a)}$, \quad for other $p$'s}
\end{cases}
\end{equation}
for every $a\in B_1$, with which we infer that
\[
\phi(p)(\widetilde{T}(a+rb))=\phi(p)(\widetilde{T}(a))+\phi(p)(r\widetilde{T}(b))
\]
for every pair $a,b\in B_1$ and every real number $r$. As $\phi(P_1)=P_2$ is a norming family,  we conclude that $\widetilde{T}$ is real-linear. 
It means that we arrive at the final positive solution for Tingley's problem under some conditions on $B_1$.

\section{Hausdorff distance between the maximal convex subsets}\label{sec3}

Recall that the Hausdorff distance $d_H(K,L)$ between non-empty closed subsets $K$ and $L$ of a metric space with  metric $d(\cdot, \cdot)$ is defined by
\[
d_H(K,L)=\max\{\sup_{a\in K}d(a,L), \sup_{b\in L}d(b,K)\}.
\]

\begin{lemma}\label{lemma8}
Let $B$ be a complex Banach space and $P$ a set of representatives for $\F_B$. 
We consider $B$ as a metric space induced by the norm. For every $p\in P$ we have
\[
d_H(F_{p,\lambda}, F_{p,\lambda'})=|\lambda-\lambda'|
\]
for every $\lambda, \lambda'\in \T$. 
Suppose  further that $p'\in P$ is different from $p$. If $\F_{p,\gamma}\cap \F_{p',-\gamma'}\ne \emptyset$
 for some $\gamma,\gamma'\in \T$, then we have
\[
d_H(F_{p,\gamma},F_{p',\gamma'})=2.
\]
\end{lemma}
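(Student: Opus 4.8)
The plan is to handle the two assertions separately, working throughout in the complex Banach space $B$ with the metric induced by the norm. For the first identity, I would fix $p\in P$ and $\lambda,\lambda'\in\T$ and show the two-sided bound. For the upper bound $d_H(F_{p,\lambda},F_{p,\lambda'})\le|\lambda-\lambda'|$, I would take an arbitrary $a\in F_{p,\lambda}$ (so $a\in S(B)$ with $p(a)=\lambda$) and produce an explicit near point in $F_{p,\lambda'}$: the natural candidate is $a+(\lambda'-\lambda)a_0$ suitably renormalized, or more simply a convex-type modification that keeps the norm equal to $1$ while changing the value of $p$ from $\lambda$ to $\lambda'$. One clean way is to pick any $b\in F_{p,\lambda'}$ and consider the segment from $a$ to $b$; since $p$ is linear and $F_{p,\lambda},F_{p,\lambda'}$ are faces of the unit ball, the point $a'=\frac{1}{|\lambda-\lambda'|}\big(|\lambda-\lambda'|\,?\big)$ — here I would instead use the standard trick: multiply through so that $\gamma a\in F_{p,1}$ for an appropriate unimodular $\gamma$, reducing to comparing $F_{p,1}$ with $F_{p,\mu}$ where $\mu=\lambda'/\lambda$. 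Then $\|a-\mu a\|=|1-\mu|\cdot\|a\|=|1-\mu|=|\lambda-\lambda'|$ and $\mu a\in F_{p,\mu}$, giving the upper bound. For the lower bound, I would use that $p$ is norm-one: if $a\in F_{p,\lambda}$ and $c\in F_{p,\lambda'}$ then $\|a-c\|\ge|p(a-c)|=|\lambda-\lambda'|$, so $\sup_{a\in F_{p,\lambda}}d(a,F_{p,\lambda'})\ge|\lambda-\lambda'|$ as soon as $F_{p,\lambda}$ is nonempty, which it is by Lemma \ref{lemma4}. Combining the two bounds yields the first equality.

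For the second assertion, suppose $p\ne p'$ in $P$ and that there is a point $e\in F_{p,\gamma}\cap F_{p',-\gamma'}$. Then $e\in S(B)$, $p(e)=\gamma$, and $p'(e)=-\gamma'$. The upper bound $d_H\le 2$ is automatic since both sets lie in $S(B)$, which has diameter $2$. For the lower bound I want to exhibit a point of $F_{p,\gamma}$ at distance $2$ from $F_{p',\gamma'}$ (or vice versa). The obvious candidate is $e$ itself: for any $c\in F_{p',\gamma'}$ we have $p'(c)=\gamma'$ while $p'(e)=-\gamma'$, so $\|e-c\|\ge|p'(e-c)|=|-\gamma'-\gamma'|=2|\gamma'|=2$. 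Since $e\in F_{p,\gamma}$ and $F_{p',\gamma'}\ne\emptyset$ (again by Lemma \ref{lemma4}), this gives $\sup_{a\in F_{p,\gamma}}d(a,F_{p',\gamma'})\ge 2$, hence $d_H(F_{p,\gamma},F_{p',\gamma'})=2$.

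The genuinely delicate point is the upper bound in the first statement: one must verify that for $a\in F_{p,\lambda}$ the rotated-and-renormalized point really lands in $F_{p,\lambda'}$ and that the distance estimate is exactly $|\lambda-\lambda'|$ rather than something larger — in particular the reduction ``multiply $a$ by $\lambda'/\lambda$'' only changes $p$-values correctly because $p$ is complex-linear, so one should be careful that $P$ consists of (complex-)linear functionals on the complex Banach space $B$, which is the case here since elements of $Q$ are extreme points of the dual ball. The symmetry of $d_H$ in its two arguments then lets one conclude without re-doing the estimate with the roles of $\lambda$ and $\lambda'$ swapped, though it is harmless to note $|\lambda-\lambda'|$ is symmetric anyway. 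Everything else is a routine application of the norming property of $p,p'$ and the nonemptiness of the faces guaranteed by Lemma \ref{lemma4}.
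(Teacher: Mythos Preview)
Your proof is correct and follows essentially the same route as the paper's: for the first identity the paper also uses the unimodular rotate $\lambda'\bar\lambda a\in F_{p,\lambda'}$ (your $\mu a$ after the reduction) together with the functional lower bound $\|a-b\|\ge|p(a)-p(b)|$, and for the second it likewise evaluates $p'$ at the common point of $F_{p,\gamma}\cap F_{p',-\gamma'}$ to force distance $2$. The only difference is cosmetic---the paper skips your reduction to $F_{p,1}$ and works directly with $\lambda'\bar\lambda a$.
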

\begin{proof}
Let $a\in F_{p,\lambda}$. Then $\lambda'\overline{\lambda}a\in F_{p,\lambda'}$. Thus 
$d(a,F_{p,\lambda'})\le \|a-\lambda'\overline{\lambda}a\|=|\lambda-\lambda'|$.  On the other hand, for 
every $b\in F_{p,\lambda'}$ we have $|\lambda-\lambda'|=|p(a)-p(b)|\le \|a-b\|$. Hence 
$|\lambda-\lambda'|\le d(a, F_{p,\lambda'})$. Therefore $d(a,F_{p,\lambda'})=|\lambda-\lambda'|$ holds for every $a\in F_{p,\lambda}$. In the similar way we obtain that $d(b,F_{p,\lambda})=|\lambda-\lambda'|$ for every $b\in F_{p,\lambda'}$. We conclude that $d_H(F_{p,\lambda},F_{p,\lambda'})=|\lambda-\lambda'|$ for every $\lambda,\lambda'\in \T$.

Suppose that $P\ni p'\ne p$ and $\F_{p,\gamma}\cap F_{p'-\gamma'}\ne \emptyset$ for some $\gamma,\gamma'\in \T$. Let $a\in \F_{p,\gamma}\cap F_{p'-\gamma'}$. Then for every $b\in F_{p',\gamma'}$ we have 
\[
2=|-\gamma'-\gamma'|=|p'(a)-p'(b)|\le \|a-b\|\le 2.
\]
Hence $d(a, F_{p',\gamma'})=2$. It follows that $d_H(F_{p,\gamma},F_{p',\gamma'})=2$.
\end{proof}
Note that the notion of the condition of the Hausdorff distance does not depend on the choice of $P$. In fact, we can describe the condition applying the terms of $Q$; the condition of the Hausdorff distance is satisfied by $B$ if and only if 
\begin{align*}
d_H(F_{q,\lambda},F_{q',\lambda'})=
\begin{cases}
|\lambda-\gamma\lambda'|, \quad &q^{-1}(1)\cap S(B)= (\gamma q')^{-1}(1)\cap S(B) \\
2, &q \not\sim q'
\end{cases}
\end{align*}
for $q,q'\in Q$.
\begin{example}\label{examp9}
Let $A$ be a uniform algebra and $P=\{\delta_x:x\in \ch(A)\}$, where $\delta_x$ denotes the point 
evaluation at $x$. Then  $\F_{\delta_x,\lambda}\cap \F_{\delta_{x'},\lambda'}\ne \emptyset$ for any pair of different poins $x$ and $x'$ in $\ch(A)$ and any $\lambda, \lambda'\in \T$ \cite[Lemma 4.1]{HOST}. Thus a uniform algebra satisfies the condition of the Hausdorff distance.
\end{example}

\begin{lemma}\label{lemma9}
Suppose that $B_1$ satisfies the condition of the Hausdorff distance. Let $P_1$ be a set of representatives for $\F_{B_1}$.
Then we have $\phi(p,\lambda)=\phi(p,\lambda')$ for every $p\in P_1$ and $\lambda, \lambda'\in \T$. Put 
\[
P_1^+=\{p\in P_1:\tau(p,i)=i\tau(p,1)\}
\]
and
\[
P_1^-=\{p\in P_1:\tau(p,i)=\bar{i}\tau(p,1)\}.
\]
Then $P_1^+$ and $P_1^-$ are possibly empty disjoint subsets of  
$P_1$ such that $P_1^+\cup P_1^-=P_1$. Furthermore we have
\[
\tau(p,\lambda)=\lambda \tau(p,1),\quad p\in P_1^+,\lambda\in \T
\]
and
\[
\tau(p,\lambda)=\bar{\lambda}\tau(p,1),\quad p\in P_1^-, \lambda\in \T.
\]
\end{lemma}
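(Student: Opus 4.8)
The plan is to exploit the isometry property of $T$ together with the Hausdorff-distance formula established in Lemma \ref{lemma8}, via the bijection $\TT$ induced on maximal convex subsets. First I would fix $p\in P_1$ and study the map $\lambda\mapsto \Psi(p,\lambda)=(\phi(p,\lambda),\tau(p,\lambda))$ on $\T$. Since $\TT$ is induced by the isometry $T$ and $T$ preserves Hausdorff distance between maximal convex sets (this is immediate because $\widetilde T$, and hence $T$, is an isometry and $\TT(F)=T(F)$), we have $d_H(F_{\phi(p,\lambda),\tau(p,\lambda)},F_{\phi(p,\lambda'),\tau(p,\lambda')})=d_H(F_{p,\lambda},F_{p,\lambda'})=|\lambda-\lambda'|<2$ for $\lambda\ne -\lambda'$. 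By the condition of the Hausdorff distance applied to $B_2$ (which $B_2$ satisfies? — careful: we only assumed $B_1$ does), the value being strictly less than $2$ would force $\phi(p,\lambda)=\phi(p,\lambda')$; but since we cannot assume $B_2$ satisfies the condition, I would instead argue directly: if $\phi(p,\lambda)\ne\phi(p,\lambda')$ then, by the second part of Lemma \ref{lemma8} applied in $B_2$, whenever the two faces meet after a rotation the distance is $2$, and one checks that $F_{\phi(p,\lambda),\tau(p,\lambda)}$ and $F_{\phi(p,\lambda'),\tau(p,\lambda')}$ can be joined through such a configuration to get $d_H\ge$ something incompatible with $|\lambda-\lambda'|$ for $\lambda'$ near $\lambda$. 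The cleanest route is: choose $\lambda'\to\lambda$, so $d_H(F_{p,\lambda},F_{p,\lambda'})\to 0$; since the faces $T(F_{p,\lambda})$ vary continuously in Hausdorff distance and, for distinct representatives, distinct faces stay at mutual distance bounded below (by the direct part of Lemma \ref{lemma8}, two faces over different representatives that intersect some rotation have distance $2$, and in general one shows $d_H\ge \sqrt 2$ or similar using that $p'(a)$ and $p'$ separate), the representative $\phi(p,\lambda)$ must be locally constant, hence constant on the connected set $\T$. This gives $\phi(p,\lambda)=\phi(p,\lambda')$ for all $\lambda,\lambda'$, and we may write $\phi(p)$.

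Having established that $\phi(p)$ is independent of $\lambda$, I would next analyze $\tau(p,\cdot):\T\to\T$. From $d_H(F_{p,\lambda},F_{p,\lambda'})=|\lambda-\lambda'|$ and $d_H(T(F_{p,\lambda}),T(F_{p,\lambda'}))=d_H(F_{\phi(p),\tau(p,\lambda)},F_{\phi(p),\tau(p,\lambda')})=|\tau(p,\lambda)-\tau(p,\lambda')|$ (using the \emph{first} part of Lemma \ref{lemma8}, which needs only $B_2$ complex and the representative fixed), we obtain that $\tau(p,\cdot)$ is an isometry of $\T$ onto $\T$ fixing the behaviour at $-\lambda$ via \eqref{-}. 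Every surjective isometry of the circle is either a rotation $\lambda\mapsto \gamma\lambda$ or a reflection $\lambda\mapsto \gamma\bar\lambda$. Combined with $\tau(p,-\lambda)=-\tau(p,\lambda)$ and $\tau(p,1)$ being the image of $1$, this forces $\tau(p,\lambda)=\tau(p,1)\lambda$ or $\tau(p,\lambda)=\tau(p,1)\bar\lambda$ for each fixed $p$. Evaluating at $\lambda=i$ distinguishes the two cases: $\tau(p,i)=i\tau(p,1)$ in the first (rotation) case and $\tau(p,i)=\bar i\tau(p,1)=-i\tau(p,1)$ in the second (reflection) case, which is exactly the definition of $P_1^+$ and $P_1^-$.

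It then remains to observe that $P_1^+$ and $P_1^-$ are disjoint and cover $P_1$: disjointness is because $i\tau(p,1)=\bar i\tau(p,1)$ would force $i=\bar i$, false; covering is because the dichotomy rotation/reflection is exhaustive for circle isometries, so every $p\in P_1$ lands in exactly one of the two sets. Finally, re-reading the case analysis gives the two displayed formulas $\tau(p,\lambda)=\lambda\tau(p,1)$ on $P_1^+$ and $\tau(p,\lambda)=\bar\lambda\tau(p,1)$ on $P_1^-$ verbatim.

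The main obstacle I anticipate is the very first step — proving $\phi(p,\lambda)$ is independent of $\lambda$ — precisely because the hypothesis is on $B_1$, not $B_2$, so one cannot simply quote the ``condition of the Hausdorff distance'' in $B_2$. The argument must extract, from $d_H(T(F_{p,\lambda}),T(F_{p,\lambda'}))=|\lambda-\lambda'|\to 0$, that the underlying representative cannot jump; this requires a lower bound on $d_H(F_{r,\mu},F_{r',\mu'})$ for $r\ne r'$ in $P_2$ that holds \emph{unconditionally}. Such a bound does hold — two faces over genuinely different extreme points of $B(B_2^*)$ are at Hausdorff distance bounded below by a positive constant, and in fact one can show it equals $2$ whenever the faces meet after a suitable rotation, using only that distinct representatives are not scalar multiples of each other — but making this rigorous without the blanket hypothesis on $B_2$ is the delicate point, and I expect the author handles it by a direct continuity/separation argument rather than by invoking the condition on $B_2$.
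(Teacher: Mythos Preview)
Your Part 2 is fine: once $\phi(p,\cdot)$ is constant, the equality $|\tau(p,\lambda)-\tau(p,\lambda')|=|\lambda-\lambda'|$ follows from the first half of Lemma \ref{lemma8} (which needs no hypothesis on $B_2$), and classifying isometries of $\T$ gives the rotation/reflection dichotomy. This is a clean variant of the paper's case-by-case computation.

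The genuine gap is in Part 1. Your continuity argument requires an \emph{unconditional} lower bound on $d_H(F_{r,\mu},F_{r',\mu'})$ for $r\ne r'$ in $P_2$, and you assert that ``such a bound does hold'' without proof. In fact no such uniform positive lower bound is available in an arbitrary complex Banach space $B_2$: the second half of Lemma \ref{lemma8} gives distance $2$ only under the extra hypothesis that $F_{r,\gamma}\cap F_{r',-\gamma'}\ne\emptyset$, which you have no way to verify in $B_2$. So the ``locally constant, hence constant on the connected $\T$'' step is unsupported.

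The paper circumvents this with a \emph{counting} argument that uses the Hausdorff-distance condition only on $B_1$. Assume $\phi(p,\lambda)\ne\phi(p,\lambda')$ with $|\lambda-\lambda'|<2$ and set $\lambda''=\lambda^2\overline{\lambda'}$, so that $|\lambda-\lambda''|=|\lambda-\lambda'|$ and $\lambda''\ne\lambda'$. By the hypothesis on $B_1$, exactly two pairs $(q,\alpha)\in P_1\times\T$ satisfy $d_H(F_{p,\lambda},F_{q,\alpha})=|\lambda-\lambda'|$, namely $(p,\lambda')$ and $(p,\lambda'')$. On the $B_2$ side, the first half of Lemma \ref{lemma8} (valid unconditionally) already produces two such pairs over the representative $\phi(p,\lambda)$; the image $(\phi(p,\lambda'),\tau(p,\lambda'))$ is a third, distinct from both since $\phi(p,\lambda')\ne\phi(p,\lambda)$. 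Since $\TT$ is a bijection preserving Hausdorff distance, the counts must agree --- contradiction. This is the missing idea.
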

\begin{proof}
First we prove that $\phi(p,\lambda)=\phi(p,\lambda')$ for every $p\in P_1$ and $\lambda, \lambda'\in \T$. Suppose not: there exist $p\in P_1$ and $\lambda, \lambda' \in \T$ such that $\phi(p,\lambda)\ne \phi(p,\lambda')$. We may assume that $|\lambda-\lambda'|<2$. 
(If $\phi(p,\lambda)\ne\phi(p,\lambda')$ with $\|\lambda-\lambda'|=2$, then $\phi(p,\lambda)\ne \phi(p,i\lambda)$ or $\phi(p,\lambda')\ne \phi(p, i\lambda)$.
 Replacing $i\lambda$ by $\lambda'$ in the first case, and $i\lambda$ by $\lambda$ in the later case we have $\phi(p,\lambda)\ne \phi(p,\lambda')$ with$|\lambda-\lambda'|=\sqrt{2}<2$.)  Letting $\lambda''=\lambda^2\overline{\lambda'}$ we have $|\lambda-\lambda'|=|\lambda-\lambda''|$ and $\lambda'\ne\lambda''$. Since $B_1$ satisfies the condition of the Hausdorff distance, an element $(q,\alpha)\in P_1\times\T$ which satisfies 
\begin{equation}\label{star}
d_H(F_{p,\lambda},F_{q,\alpha})=|\lambda-\lambda'|
\end{equation}
is only two elements $(p,\lambda')$ and $(p,\lambda'')$. As $\TT$ preserves the Hausdorff distance we have
\[
d_H(F_{\phi(p,\lambda),\tau(p,\lambda)},F_{\phi(p,\lambda'),\tau(p,\lambda')})
=
d_H(F_{p,\lambda},F_{p,\lambda'})=|\lambda-\lambda'|.
\]
Applying Lemma  \ref{lemma8} we have
\begin{equation}\label{*'}
d_H(F_{\phi(p,\lambda),\tau(p,\lambda)},F_{t,\beta})=|\lambda-\lambda'|
\end{equation}
if $(t,\beta)=(\phi(p,\lambda),\lambda\overline{\lambda'}\tau(p,\lambda)), (\phi(p,\lambda),\lambda\overline{\lambda''}\tau(p,\lambda)), (\phi(p,\lambda'),\tau(p,\lambda'))$. As we suppose that $\phi(p,\lambda)\ne\phi(p,\lambda')$, the number of points $(t,\beta)\in P_2\times \T$ which satisfy \eqref{*'} is at least three, while the number of points $(q,\alpha)\in P_1\times \T$ which satisfies \eqref{star} is two since $B_1$ satisfies the condition of the Hausdorff distance. 
On the other hand the numbers of $(q,\alpha)$ and $(t,\beta)$ which satisfy \eqref{star} and  \eqref{*'} respectively must coincide each other because 
$\TT$ preserves the Hausdorff distance between the maximal convex subset. We arrive at a contradiction proving that 
$\phi(p,\lambda)=\phi(p,\lambda')$ for every $p\in P_1$ and $\lambda, \lambda'\in \T$. 
In the following we simply write $\phi(p)$ instead of $\phi(p,\lambda)$ by discarding the second term. 

We prove that $\tau(p,\lambda)=\lambda\tau(p,1)$ or $\bar{\lambda}\tau(p,1)$ for $p\in P_1$ and $\lambda\in \T$. Since $\TT$ preserves the Hausdorff distance we have by Lemma \ref{lemma8} that
\begin{multline*}
|\lambda-1|=d_H(F_{p,\lambda},F_{p,1})=d_H(F_{\phi(p),\tau(p,\lambda)},F_{\phi(p),\tau(p,1)})
\\
=
|\tau(p,\lambda)-\tau(p,1)|=|\tau(p,\lambda)\overline{\tau(p,1)}-1|,
\end{multline*}
hence 
\begin{equation}\label{marumarumaru}
\text{$\tau(p,\lambda)\overline{\tau(p,1)}=\lambda$ or $\bar{\lambda}$}
\end{equation}
Thus we have $\tau(p,\lambda)=\lambda \tau(p,1)$ or $\bar{\lambda}\tau(p,1)$. Letting $\lambda=i$ we infer that $P_1^+\cup P_1^-=P_1$ and $P_1^+\cap P_1^-=\emptyset$.  We have
\[
2=d_H(F_{p,i},F_{p,-i})=d_H(F_{\phi(p),\tau(p,i)},F_{\phi(p), \tau(p,-i)})=|\tau(p,i)-\tau(p,-i)|,
\]
hence $\tau(p,-i)=-\tau(p,i)$ for every $p\in P_1$. We show that $\tau(p,\lambda)=\lambda\tau(p,1)$ for $p\in P_1^+$ and $\lambda\in \T$. 
By \eqref{marumarumaru} we may suppose that $\im\lambda\ne 0$. In the case of $\im\lambda>0$ we have
\begin{multline*}
|i-\lambda|=d_H(F_{p,i}, F_{p,\lambda})
=d_H(F_{\phi(p), \tau(p,i)},F_{\phi(p),\tau(p,\lambda)})
\\
=|\tau(p,i)-\tau(p,\lambda)|=|i\tau(p,1)-\tau(p,\lambda)|=|i-\tau(p,\lambda)\overline{\tau(p,1)}|.
\end{multline*}
Applying \eqref{marumarumaru} we infer that $\tau(p,\lambda)\overline{\tau(p,1)}=\lambda$ as $\im\lambda>0$, hence $\tau(p,\lambda)=\lambda\tau(p,1)$. 
Next we consider the case of $\im\lambda<0$. We have
\begin{multline*}
|-i-\lambda|=d_H(F_{p,-i}, F_{p,\lambda})
\\
=d_H(F_{\phi(p), \tau(p,-i)},F_{\phi(p),\tau(p,\lambda)})
=|\tau(p,-i)-\tau(p,\lambda)|,
\end{multline*}
as $\tau(p,-i)=-\tau(p,i)$ we have 
\begin{multline*}
=|-\tau(p,i)-\tau(p,\lambda)|=|-i\tau(p,1)-\tau(p,\lambda)|=|-i-\tau(p,\lambda)\overline{\tau(p,1)}|.
\end{multline*}
As $\im \lambda<0$, we infer that 
 $\tau(p,\lambda)=\lambda\tau(p,1)$ by \eqref{marumarumaru}.

Similarly, it can be proved that $\tau(p,\lambda)=\bar{\lambda}\tau(p,1)$ if $p\in P_1^-$ and $\lambda\in \T$.
\end{proof}
\section{A sufficient condition for the complex Mazur-Ulam property}\label{sec4}
We define a set $M_{p,\alpha}$ with which the map $\Psi$ plays a crucial role to work out the  complex Mazur-Ulam property for a uniform algebra. We exhibit the definition of $M_{p,\alpha}$. We denote $\bar\D=\{z\in {\mathbb{F}}:|z|\le 1\}$, where $\mathbb{F}=\mathbb{R}$ if the corresponding Banach space is a real one and 
$\mathbb{F}=\mathbb{C}$ if the corresponding Banach space is 
a complex one.
\begin{definition}\label{def12}
Let $B$ be a real or complex Banach space and $P$ a set of representatives for $\F_B$. 
For $p\in P$ and $\alpha\in \bar\D$ we denote
\[
M_{p,\alpha}=\{a\in S(B): d(a,F_{p,\alpha/|\alpha|})\le 1-|\alpha|, 
d(a, F_{p,-\alpha/|\alpha|})\le 1+|\alpha|\},
\]
where we read $\alpha/|\alpha|=1$ if $\alpha=0$. 
\end{definition}
\begin{lemma}\label{mpalpha}
If $B_j$ is a real Banach space for $j=1,2$, then we have
\[
T(M_{p,\alpha})=M_{\phi(p),\alpha\tau(p,1)}
\]
for every $(p,\alpha)\in P_1\times \T$. If $B_j$ is a  complex Banach space which satisfy the condition of the Hausdorff distance for $j=1,2$, then we have
\[
T(M_{p,\alpha})=
\begin{cases}
M_{\phi(p),\alpha \tau(p,1)},\quad p\in P_1^+
\\
M_{\phi(p),\overline{\alpha}\tau(p, 1)},\quad p\in P_1^-
\end{cases}
\]
for every $(p,\alpha)\in P_1\times \T$.
\end{lemma}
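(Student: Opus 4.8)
The plan is to show that the defining inequalities of $M_{p,\alpha}$ transform correctly under $T$ by tracking what $T$ does to the two maximal convex sets $F_{p,\alpha/|\alpha|}$ and $F_{p,-\alpha/|\alpha|}$ appearing in Definition~\ref{def12}, together with the fact that $T$ is an isometry, hence preserves the point-to-set distances $d(a,F)$. First I would record the elementary but essential observation that $\widetilde T$, being norm-preserving, satisfies $d(T(a),T(F))=d(a,F)$ for every $a\in S(B_1)$ and every $F\in \F_{B_1}$, since $T$ is a surjective isometry of the spheres and $T(F)\in\F_{B_2}$; consequently $a\in M_{p,\alpha}$ if and only if $T(a)$ satisfies the two distance inequalities with $F_{p,\pm\alpha/|\alpha|}$ replaced by $T(F_{p,\pm\alpha/|\alpha|})=\TT(F_{p,\pm\alpha/|\alpha|})$. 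So the whole problem reduces to identifying these image sets via \eqref{fundamental}.

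Next I would split into the real and complex cases exactly as in the statement. In the real case $\T=\{\pm1\}$ and $\alpha\in\T$ forces $\alpha/|\alpha|=\alpha$ and $-\alpha/|\alpha|=-\alpha$; by \eqref{fundamental} we get $T(F_{p,\alpha})=F_{\phi(p,\alpha),\tau(p,\alpha)}$ and $T(F_{p,-\alpha})=F_{\phi(p,-\alpha),\tau(p,-\alpha)}$. Using \eqref{-}, i.e.\ $\phi(p,-\alpha)=\phi(p,\alpha)$ and $\tau(p,-\alpha)=-\tau(p,\alpha)$, and noting that in the real case $\tau(p,\alpha)=\alpha\tau(p,1)$ automatically (since $\alpha=\pm1$ and $\tau(p,-1)=-\tau(p,1)$), the two image sets become $F_{\phi(p),\alpha\tau(p,1)}$ and $F_{\phi(p),-\alpha\tau(p,1)}$, which are precisely the two sets defining $M_{\phi(p),\alpha\tau(p,1)}$ (here $|\alpha\tau(p,1)|=1$, so $\alpha\tau(p,1)/|\alpha\tau(p,1)|=\alpha\tau(p,1)$). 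Substituting into the transformed inequalities yields $T(a)\in M_{\phi(p),\alpha\tau(p,1)}$, and surjectivity of $T$ gives the reverse inclusion, so $T(M_{p,\alpha})=M_{\phi(p),\alpha\tau(p,1)}$.

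In the complex case the extra input is Lemma~\ref{lemma9}: since $B_1$ satisfies the condition of the Hausdorff distance, $\phi(p,\lambda)=\phi(p)$ is independent of $\lambda$, and $\tau(p,\lambda)=\lambda\tau(p,1)$ for $p\in P_1^+$ while $\tau(p,\lambda)=\overline\lambda\,\tau(p,1)$ for $p\in P_1^-$. For $\alpha\in\T$ we again have $\alpha/|\alpha|=\alpha$, so by \eqref{fundamental}, $T(F_{p,\alpha})=F_{\phi(p),\tau(p,\alpha)}$ and $T(F_{p,-\alpha})=F_{\phi(p),\tau(p,-\alpha)}=F_{\phi(p),-\tau(p,\alpha)}$. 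For $p\in P_1^+$ this reads $F_{\phi(p),\alpha\tau(p,1)}$ and $F_{\phi(p),-\alpha\tau(p,1)}$, matching the definition of $M_{\phi(p),\alpha\tau(p,1)}$; for $p\in P_1^-$ it reads $F_{\phi(p),\overline\alpha\tau(p,1)}$ and $F_{\phi(p),-\overline\alpha\tau(p,1)}$, matching $M_{\phi(p),\overline\alpha\tau(p,1)}$. Then the distance-preservation argument gives the two claimed identities, again with the reverse inclusion coming from surjectivity of $T$.

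The only genuinely delicate point—hence the step I would write out most carefully—is the bookkeeping of which $F_{p,\cdot}$ plays the role of the ``$+$'' set and which plays the ``$-$'' set after applying $\phi,\tau$, especially confirming that $|\alpha\tau(p,1)|=1=|\overline\alpha\tau(p,1)|$ so that the normalizations $\beta/|\beta|$ in the definition of $M_{\phi(p),\beta}$ come out exactly right and no spurious factor appears. Everything else is a formal consequence of $T$ being a bijective isometry between the spheres that carries maximal convex sets to maximal convex sets according to \eqref{fundamental}; there is no analytic or geometric obstacle beyond this, and no appeal to the structure of uniform algebras is needed at this stage.
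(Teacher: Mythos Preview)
Your approach is correct and matches the paper's: both identify $T(F_{p,\pm\alpha/|\alpha|})$ via \eqref{fundamental}, simplify $\phi$ and $\tau$ using \eqref{-} in the real case or Lemma~\ref{lemma9} in the complex case, and then transfer the defining distance inequalities of $M_{p,\alpha}$ through the bijective isometry $T$. One remark: the ``$(p,\alpha)\in P_1\times\T$'' in the statement is evidently a misprint for $P_1\times\bar\D$ (compare the paper's own proof and the applications in Propositions~\ref{prop15} and~\ref{prop2.2}); your first-paragraph framework already handles general $\alpha\in\bar\D$, so you need only carry $\alpha/|\alpha|$ through the later paragraphs instead of specializing to $|\alpha|=1$.
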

\begin{proof}
Due to the definition of the map $\Psi$ we have
\[
T(F_{p,\frac{\alpha}{|\alpha|}})=F_{\phi(p,\frac{\alpha}{|\alpha|}), \tau(p,\frac{\alpha}{|\alpha|})}
\]
and
\[
T(F_{p,-\frac{\alpha}{|\alpha|}})=F_{\phi(p,-\frac{\alpha}{|\alpha|}), \tau(p,-\frac{\alpha}{|\alpha|})}
\]

Suppose that  $B_j$ is a real Banach space first. Then by the definition $\T=\{\pm1\}$.
By \eqref{-} we have $\phi(p,1)=\phi(p,-1)$ for every $p\in P_1$. 
Hence $\phi(p,\lambda)$ does not depend on the second term for a real Banach space. We also have $\tau(p,-1)=-\tau(p,1)$ for every $p\in P_1$ by \eqref{-}. It follows that
\[
T(F_{p,\frac{\alpha}{|\alpha|}})=F_{\phi(p),\frac{\alpha}{|\alpha|}\tau(p,1)}
\]
and
\[
T(F_{p,-\frac{\alpha}{|\alpha|}})=F_{\phi(p),-\frac{\alpha}{|\alpha|}\tau(p,1)}
\]
As $T$ is a surjective isometry we have
\[
d(a,F_{p,\frac{\alpha}{|\alpha|}})=d(T(a),F_{\phi(p),\frac{\alpha}{|\alpha|}\tau(p,1)})
\]
and
\[
d(a,F_{p,-\frac{\alpha}{|\alpha|}})=d(T(a),F_{\phi(p),-\frac{\alpha}{|\alpha|}\tau(p,1)})
\]
As  $T$ is a bijection we conclude that
\[
T(M_{p,\alpha})=M_{\phi(p),\alpha\tau(p,1)}
\]
for every $p\in P_1$ and $\alpha\in \bar\D$.

Suppose next that $B_j$ is a complex Banach space which satisfies that the condition of the Hausdorff distance. Let $p\in P_1^+$. Then by Lemma \ref{lemma9} we have 
\[
T(F_{p,\frac{\alpha}{|\alpha|}})=F_{\phi(p),\frac{\alpha}{|\alpha|}\tau(p,1)}
\]
and
\[
T(F_{p,-\frac{\alpha}{|\alpha|}})=F_{\phi(p),-\frac{\alpha}{|\alpha|}\tau(p,1)}.
\]
The left of the proof is similar to the case that $B_j$ is a real Banach space, hence we see that
\[
T(M_p,\alpha)=M_{\phi(p),\alpha\tau(p,1)}
\]
for every $p\in P_1^+$ and $\alpha\in \bar\D$.  The proof for the case of $p\in P_1^-$ is similar and is omitted.
\end{proof}

\begin{lemma}\label{lemma}
Suppose that $B$ is a real or complex Banach space and $P$ is a set of representatives for $\F_B$. For every $p\in P$ and $\alpha\in \mathbb{D}$ we have
\[
M_{p,\alpha}\subset \{a\in S(B):p(a)=\alpha\}.
\]
\end{lemma}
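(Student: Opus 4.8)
The plan is to fix $p\in P$ and $\alpha\in\mathbb D$, take an arbitrary $a\in M_{p,\alpha}$, and show $p(a)=\alpha$ by squeezing $p(a)$ between the two distance constraints that define $M_{p,\alpha}$. The key observation is that $p$, being an extreme point of $B(B^*)$ with $\|p\|=1$, is $1$-Lipschitz, so it cannot decrease distances: for any $b\in F_{p,\alpha/|\alpha|}$ we have $|p(a)-\alpha/|\alpha|\,|=|p(a)-p(b)|\le\|a-b\|$, and taking the infimum over such $b$ gives $|p(a)-\alpha/|\alpha|\,|\le d(a,F_{p,\alpha/|\alpha|})\le 1-|\alpha|$. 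Similarly $|p(a)+\alpha/|\alpha|\,|\le d(a,F_{p,-\alpha/|\alpha|})\le 1+|\alpha|$. (One must first note that both $F_{p,\alpha/|\alpha|}$ and $F_{p,-\alpha/|\alpha|}$ are nonempty — they lie in $\F_B$ by Lemma~\ref{lemma4} — so the distances are finite and the infima are over nonempty sets.)

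Now write $\zeta=\alpha/|\alpha|\in\T$ (with the convention $\zeta=1$ when $\alpha=0$, but the case $\alpha=0$ forces $p(a)=0$ directly from $|p(a)|\le 1-0$... actually it needs the argument too, so I keep it general) and set $w=\zeta^{-1}p(a)$, which for a complex space lies in $\bar{\mathbb D}$ and for a real space in $[-1,1]$. The two inequalities become $|w-1|\le 1-|\alpha|$ and $|w+1|\le 1+|\alpha|$. Squaring and expanding (using $|w|\le 1$): the first gives $|w|^2-2\Real w+1\le 1-2|\alpha|+|\alpha|^2$, i.e. $2\Real w\ge |w|^2-|\alpha|^2+2|\alpha|$; the second gives $|w|^2+2\Real w+1\le 1+2|\alpha|+|\alpha|^2$, i.e. $2\Real w\le |\alpha|^2-|w|^2+2|\alpha|$. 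Adding these two yields $4\Real w\le 4|\alpha|$, and subtracting yields $4\Real w\ge 4|w|^2-4|\alpha|^2+4|\alpha|-(\text{nothing})$... I will instead argue more cleanly: adding gives $\Real w\le|\alpha|$, while the first inequality alone, rewritten as $2\Real w\ge |w|^2-|\alpha|^2+2|\alpha|\ge 2|\alpha|-|\alpha|^2+|w|^2$, combined with $|w|\le 1$ hence $|w|^2\ge 2|w|-1$... Let me fix the endgame: from $|w-1|\le 1-|\alpha|$ we get in particular $1-\Real w\le|w-1|\le 1-|\alpha|$ so $\Real w\ge|\alpha|$; together with $\Real w\le|\alpha|$ from the sum, we conclude $\Real w=|\alpha|$. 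Feeding this back into $|w-1|^2=|w|^2-2|\alpha|+1\le(1-|\alpha|)^2=1-2|\alpha|+|\alpha|^2$ forces $|w|^2\le|\alpha|^2$, so $|w|\le|\alpha|$; but $\Real w=|\alpha|\le|w|$, so $|w|=|\alpha|$ and $w=|\alpha|$ is real and nonnegative, i.e. $p(a)=\zeta|\alpha|=\alpha$. This is exactly the claimed inclusion.

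The main obstacle is purely bookkeeping rather than conceptual: one must handle the degenerate case $\alpha=0$ (where $\zeta$ is declared to be $1$ and the inequalities read $|p(a)-1|\le 1$ and $|p(a)+1|\le 1$, whose intersection is the single point $p(a)=0$ — the same chain of inequalities applies) and must be careful that in the real case $w\in[-1,1]$ so "$\Real w$" is just $w$ and the geometry degenerates to an interval, but the same estimates go through verbatim. I would present the estimate once, uniformly in $\mathbb F\in\{\mathbb R,\mathbb C\}$, noting only that $p(a)\in\bar{\mathbb D}$ since $\|p\|\le 1$ and $\|a\|=1$, and that the Lipschitz bound $|p(x)-p(y)|\le\|x-y\|$ is immediate from $\|p\|\le 1$. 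No appeal to the condition of the Hausdorff distance is needed here; the lemma is a soft consequence of $p$ being a norm-one functional together with the nonemptiness of the relevant maximal convex sets.
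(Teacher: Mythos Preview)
Your proof is correct and follows essentially the same route as the paper: obtain the two inequalities $|p(a)-\alpha/|\alpha||\le 1-|\alpha|$ and $|p(a)+\alpha/|\alpha||\le 1+|\alpha|$ from the $1$-Lipschitz property of $p$, then conclude $p(a)=\alpha$. The paper simply asserts the last implication (``by the two inequalities $p(a)$ have to be $\alpha$''), whereas you spell out the algebra showing the two closed disks meet only at $\alpha$; your exposition is meandering, but the argument is sound and matches the paper's approach.
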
 
\begin{proof}
Let $a\in M_{p,\alpha}$. Then for arbitrary $b\in F_{p,\frac{\alpha}{|\alpha|}}$
\[
\left|p(a)-\frac{\alpha}{|\alpha|}\right|=|p(a)-p(b)|\le \|a-b\|.
\]
As $d(a,F_{p,\frac{\alpha}{|\alpha|}})\le 1-|\alpha|$ 
\[
\left|p(a)-\frac{\alpha}{|\alpha|}\right|\le d(a,F_{p,\frac{\alpha}{|\alpha|}})\le 1-|\alpha|.
\]
In the same way we have
\[
\left|p(a)-\left(-\frac{\alpha}{|\alpha|}\right)\right|\le d(a,F_{p,-\frac{\alpha}{|\alpha|}})\le 1+|\alpha|.
\]
Then by the two inequalities  $p(a)$ have to be $\alpha$.
\end{proof}
The following is an auxiliary result.
\begin{prop}\label{prop15}
Let $B_1$ be a complex Banach space and $P_1$ a set of representatives for $\F_{B_1}$. 
Assume the following two conditions:
\begin{itemize}
\item[i)] 
$B_1$ satisfies the condition of the Hausdorff distance,
\item[ii)]
$M_{p,\alpha}=\{a\in S(B_1):p(a)=\alpha\}$ for every $p\in P_1$ and $\alpha\in \mathbb{D}$.
\end{itemize}
Then $B_1$ satisfies the complex Mazur-Ulam property.
\end{prop}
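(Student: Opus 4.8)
The plan is to show that under hypotheses (i) and (ii), the basic equation \eqref{important} can be upgraded from ``$a\in F_{p,p(a)}$'' to ``all $a\in S(B_1)$'', which by the discussion surrounding \eqref{then}--\eqref{thenthen} yields real-linearity of $\widetilde T$. First I would record the structural consequences already available: by Lemma \ref{lemma9}, condition (i) gives $\phi(p,\lambda)=\phi(p)$ independent of $\lambda$, the decomposition $P_1=P_1^+\cup P_1^-$, and the formula $\tau(p,\lambda)=\lambda\tau(p,1)$ for $p\in P_1^+$, $\tau(p,\lambda)=\overline\lambda\tau(p,1)$ for $p\in P_1^-$. By Lemma \ref{mpalpha} (complex case, which applies since $B_2$, being the target of a surjective isometry from $S(B_1)$, must also satisfy the condition of the Hausdorff distance — this point needs a short argument, e.g. transporting the distance characterization through $\TT$), we have $T(M_{p,\alpha})=M_{\phi(p),\alpha\tau(p,1)}$ for $p\in P_1^+$ and $T(M_{p,\alpha})=M_{\phi(p),\overline\alpha\tau(p,1)}$ for $p\in P_1^-$, now for \emph{every} $\alpha\in\bar\D$, not merely $\alpha\in\T$, because condition (ii) plus Lemma \ref{lemma} pin down $M_{p,\alpha}$ for $\alpha\in\D$ and the argument of Lemma \ref{mpalpha} goes through verbatim on all of $\bar\D$.

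Next I would exploit condition (ii) itself. Fix $p\in P_1$ and an arbitrary $a\in S(B_1)$; set $\alpha=p(a)$. If $|\alpha|<1$ then $a\in M_{p,\alpha}$ by (ii), so $T(a)\in T(M_{p,\alpha})=M_{\phi(p),\alpha\tau(p,1)}$ (resp. $M_{\phi(p),\overline\alpha\tau(p,1)}$) for $p\in P_1^+$ (resp. $P_1^-$); applying Lemma \ref{lemma} to $B_2$ and the representative $\phi(p)\in P_2$ gives $\phi(p)(T(a))=\alpha\tau(p,1)$ (resp. $\overline\alpha\tau(p,1)$). If $|\alpha|=1$, i.e. $a\in F_{p,\alpha}$, this is exactly the known equation \eqref{important} together with Lemma \ref{lemma9}. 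Hence in all cases
\[
\phi(p)(T(a))=\tau(p,1)\times
\begin{cases}
p(a),& p\in P_1^+\\
\overline{p(a)},& p\in P_1^-
\end{cases}
\]
for every $a\in S(B_1)$ — this is precisely \eqref{then} holding on all of $S(B_1)$.

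From here I follow the roadmap laid out in Section \ref{sec2}: by homogeneity of $\widetilde T$ and the homogeneity of each $\phi(p)$, equation \eqref{then} extends to \eqref{thenthen}, i.e. $\phi(p)(\widetilde T(a))=\tau(p,1)\,p(a)$ for $p\in P_1^+$ and $=\tau(p,1)\,\overline{p(a)}$ for $p\in P_1^-$, valid for all $a\in B_1$. Since $a\mapsto p(a)$ is complex-linear and $a\mapsto\overline{p(a)}$ is conjugate-linear, in either case $a\mapsto\phi(p)(\widetilde T(a))$ is real-linear; thus $\phi(p)(\widetilde T(a+rb))=\phi(p)(\widetilde T(a))+r\,\phi(p)(\widetilde T(b))$ for all $a,b\in B_1$, $r\in\R$. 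Because $\phi(P_1)=P_2$ and a set of representatives for $\F_{B_2}$ is a norming family (hence separates points), we conclude $\widetilde T(a+rb)=\widetilde T(a)+r\widetilde T(b)$, so $\widetilde T$ is the desired surjective real-linear isometry extending $T$.

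The main obstacle I anticipate is the transfer of the hypotheses from $B_1$ to $B_2$: Lemma \ref{mpalpha} and the application of Lemma \ref{lemma} to $\phi(p)\in P_2$ tacitly require that $B_2$ also satisfies the condition of the Hausdorff distance, and one must check this is forced by the existence of a surjective isometry $T:S(B_1)\to S(B_2)$. This follows because $\TT$ is a Hausdorff-distance-preserving bijection $\F_{B_1}\to\F_{B_2}$ and the distance condition, as noted after Lemma \ref{lemma8}, is intrinsic and phrased in terms that $\TT$ respects (two distinct representatives sit at Hausdorff distance $2$ once their fibers meet, and $T^{-1}$ transports this back). A secondary delicate point is the extension of Lemma \ref{mpalpha}'s identity to $\alpha\in\D$ rather than only $\alpha\in\T$; but condition (ii) provides the missing description of $M_{p,\alpha}$ for interior $\alpha$, and the surjective-isometry computation with the two defining inequalities $d(a,F_{p,\pm\alpha/|\alpha|})\le 1\mp|\alpha|$ is unchanged. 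Everything else is the bookkeeping already rehearsed in Section \ref{sec2}.
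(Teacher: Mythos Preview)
Your proof is correct and follows essentially the same route as the paper's. The obstacle you anticipate---whether $B_2$ must also satisfy the Hausdorff-distance condition---is a non-issue: although Lemma~\ref{mpalpha} is stated with the condition on both $B_j$, its proof invokes only Lemma~\ref{lemma9} (which requires the condition on $B_1$ alone), and Lemma~\ref{lemma} holds for an arbitrary Banach space, so the paper applies Lemma~\ref{mpalpha} directly from condition~(i) and then reads off $\phi(p)(T(a))$ via Lemma~\ref{lemma} without any transfer argument.
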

\begin{proof}
Suppose that $B_2$ is a complex Banach space and $T:B_1\to B_2$ a surjective isometry. 
Applying Lemma \ref{lemma9} for the equation \eqref{important} we get
\[
\phi(p)(T(a))=\tau(p,1)\times
\begin{cases}
p(a),\quad & p\in P_1^+ 
\\
\overline{p(a)},\quad & p\in P_1^-,
\end{cases}
\]
for $a\in S(B_1)$ such that  $|p(a)|=1$.
We prove that 
\[
\phi(p)(T(a))=\tau(p,1)\times
\begin{cases}
p(a), \quad & p\in P_1^+
\\
\overline{p(a)}, \quad & p\in P_1^-
\end{cases}
\]
for any $a\in S(B_1)$. 
Let $a\in S(B_1)$ and $\alpha=p(a)$. Then $|\alpha|\le 1$. By condition ii), $a\in M_{p,\alpha}$ and $T(a)\in T(M_{p,\alpha})$. As i) is assumed, we have by Lemma \ref{mpalpha} that
\[
T(M_{p,\alpha})=
\begin{cases}
M_{\phi(p),\alpha \tau(p,1)},\quad p\in P_1^+
\\
M_{\phi(p),\overline{\alpha}\tau(p, 1)},\quad p\in P_1^-.
\end{cases}
\]
Therefore
\[
\phi(p)(T(a))=\alpha\tau(p,1)=p(a)\tau(p,1)
\]
if $p\in P_1^+$. In a similar way we have
\[
\phi(p)(T(a))=\overline{p(a)}\tau(p,1)
\]
if $p\in P_1^-$.
We conclude that 
\[
\phi(p)(T(a))=\tau(p,1)\times
\begin{cases}
p(a), \quad &p\in P_1^+
\\
\overline{p(a)},\quad & p\in P_1^-
\end{cases}
\]
for every $a\in S(B_1)$. Let $c\in B_1$ and $c\ne 0$. Since $\frac{c}{\|c\|}\in S(B_1)$, and $\phi(p)$ and $p$ are real-linear we have 
\begin{align*}
\phi(p)(\widetilde{T}(c))& =\phi(p)\left(\|c\|T\left(\frac{c}{\|c\|}\right)\right)=\|c\|\tau(p,1)\times
\begin{cases}
p\left(\frac{c}{\|c\|}\right),\quad p\in P_1^+
\\
\overline{p\left(\frac{c}{\|c\|}\right)},\quad p\in P_1^-
\end{cases}
\\
&
=
\tau(p,1)\times
\begin{cases}
p(c), \quad &p\in P_1^+
\\
\overline{p(c)}, \quad & p\in P_1^-
\end{cases}
\end{align*}
We infer that
\begin{multline*}
\phi(p)(\widetilde{T}(a+rb))=\tau(p,1)\times 
\begin{cases}
p(a+rb), \quad p\in P_1^+ 
\\
\overline{p(a+rb)}, \quad p\in P_1^-
\end{cases}
\\
=\tau(p,1)\times 
\begin{cases}
p(a), \quad p\in P_1^+ 
\\
\overline{p(a)}, \quad p\in P_1^-
\end{cases}
+
\tau(p,1)\times
\begin{cases}
rp(b),\quad p\in P_1^+
\\
r\overline{p(b)}, \quad p\in P_1^-
\end{cases}
\\
=
\phi(p)(\widetilde{T}(a)+r\widetilde{T}(b))
\end{multline*}
for every pair $a,b\in B_1$ and every real number $r$. 
As $\phi(P_1)=P_2$ is a norming family we conclude that
\[
\widetilde{T}(a+rb)=\widetilde{T}(a)+r\widetilde{T}(b)
\]
for every pair $a,b\in $ and every real number $r$. By the definition of $\widetilde{T}$ it is a bijection from $B_1$ onto $B_2$ and it satisfies the equality $\|\widetilde{T}(a)\|=\|a\|$ for every $a\in B_1$. Thus $\widetilde{T}$ is a surjective  real-linear isometry from $B_1$ onto $B_2$ which extend $T$.
\end{proof}
The following is the main result in this paper.
\begin{theorem}\label{main}
A uniform algebra satisfies the complex Mazur-Ulam property.
\end{theorem}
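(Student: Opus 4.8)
The plan is to derive Theorem~\ref{main} from Proposition~\ref{prop15}. Regard the uniform algebra $A$ as a complex Banach space and take for its set of representatives $P_1=\{\delta_x:x\in\ch(A)\}$, as in Example~\ref{examp3}. Hypothesis~(i) of Proposition~\ref{prop15}, the condition of the Hausdorff distance, is precisely Example~\ref{examp9}, so the whole task is to verify hypothesis~(ii):
\[
M_{\delta_x,\alpha}=\{f\in S(A):f(x)=\alpha\},\qquad x\in\ch(A),\ \alpha\in\D .
\]
The inclusion $\subseteq$ is Lemma~\ref{lemma}; I would prove the reverse one directly. Fix $x\in\ch(A)$ and $f\in S(A)$ with $f(x)=\alpha$, $|\alpha|<1$. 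Since multiplication by $\overline\alpha/|\alpha|\in\T$ is a surjective isometry of $A$ sending $F_{\delta_x,\lambda}$ to $F_{\delta_x,(\overline\alpha/|\alpha|)\lambda}$, hence $M_{\delta_x,\alpha}$ to $M_{\delta_x,|\alpha|}$, I may assume $\alpha=t\in[0,1)$, so that it remains to show $d(f,F_{\delta_x,1})\le 1-t$ and $d(f,F_{\delta_x,-1})\le 1+t$ (the opposite inequalities being trivial, these are equalities, but only the displayed bounds are needed). The tool throughout is that a Choquet boundary point of a uniform algebra is a weak peak point (see \cite{br}): for every neighbourhood $V$ of $x$ and every $\eta>0$ there is $u\in A$ with $u(x)=\|u\|=1$ and $|u|<\eta$ off $V$; passing to a power of $u$ makes this as small as one wishes, and $u$ is then automatically uniformly close to $1$ on some neighbourhood of $x$. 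Given $\varepsilon>0$, I fix $V$ with $|f-t|<\varepsilon$ on $V$ together with such a $u$.

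For the bound on $d(f,F_{\delta_x,-1})$ I would take $h:=-\bigl(t\cdot 1+(1-t)u\bigr)$, where $1$ denotes the constant function. As a convex combination of two norm-one elements, $t\cdot 1+(1-t)u$ has norm at most $1$; since its value at $x$ is $t+(1-t)=1$, its norm is exactly $1$, and so $h\in F_{\delta_x,-1}$. Estimating $\|f-h\|=\|f+t\cdot 1+(1-t)u\|$ on $V$ (where $f\approx t$ and $2t+(1-t)u$, running over the disc of centre $2t$ and radius $1-t$, has modulus $\le 1+t$) and off $V$ (where $u$ is negligible and $\|f\|\le 1$) gives $\|f-h\|\le 1+t+\varepsilon$; letting $\varepsilon\downarrow 0$ yields $d(f,F_{\delta_x,-1})\le 1+t$.

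The bound on $d(f,F_{\delta_x,1})$ is the crux, and controlling the norm in it is the step I expect to be the main obstacle. The natural candidate is $g:=f+u(1-f)$: then $g(x)=t+(1-t)=1$ and $\|f-g\|=\|u(1-f)\|\le 1-t+\varepsilon$. A short computation shows that, at any point, $|f+u(1-f)|\le 1$ holds exactly when $(1-f)(1-u)$ lies in the disc $\{z:|z-1|\le 1\}$; since $\|f\|\le 1$ and $\|u\|\le 1$ put $1-f$ and $1-u$ in that disc, and since the disc is star-shaped about $0$, this is satisfied wherever $u$ is not too close to $1$, hence off a small neighbourhood of $x$, while near $x$ — where $f\approx t$, where $1-\bigl|\,t+(1-t)u\,\bigr|$ is bounded below by a positive multiple of $|1-u|^2$, and where $|(f-t)(1-u)|\le\varepsilon|1-u|$ — the violation is at most of order $\varepsilon^2$ (of order $\varepsilon$ when $t=0$). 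Thus $\|g\|\le 1+\rho$ with $\rho=o(\varepsilon)$. I would absorb this overshoot by a limiting argument: $\widehat g:=g/\|g\|$ lies in $S(A)$ with $\widehat g(x)$ real, $1-\widehat g(x)\le\rho$, and $\|f-\widehat g\|\le 1-t+\varepsilon+\rho$; re-running the construction with $\widehat g$ in place of $f$ — choosing its parameter small enough that the new overshoot is a much higher power of $\rho$ — and iterating produces a norm-convergent sequence whose limit $g_\infty$ satisfies $g_\infty(x)=1$ and $\|g_\infty\|=1$, that is $g_\infty\in F_{\delta_x,1}$, with $\|f-g_\infty\|\le 1-t+2\varepsilon$. (Alternatively this bookkeeping can be replaced by an appeal to the uniform-algebra distance estimates of \cite{HOST}.) Letting $\varepsilon\downarrow 0$ gives $d(f,F_{\delta_x,1})\le 1-t$.

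With both bounds in hand, $f\in M_{\delta_x,t}$, so hypothesis~(ii) holds; Proposition~\ref{prop15} then yields the complex Mazur-Ulam property for $A$. Apart from the norm-control step flagged above, the argument is a routine combination of the Arens-Kelley description of $Q$, the weak-peak-point property of Choquet boundary points, and the elementary geometry of the disc $\{z:|z-t|\le 1-t\}$, which is internally tangent to $\T$ at $1$.
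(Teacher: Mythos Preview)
Your strategy is exactly the paper's: take $P_1=\{\delta_x:x\in\ch(A)\}$, verify hypotheses~(i) and~(ii) of Proposition~\ref{prop15}, and conclude. The only difference is that the paper dispatches~(ii) in one line by citing \cite[Lemma~6.3]{HOST}, whereas you reprove that lemma directly via peaking functions and a normalization--iteration; your sketch is correct in outline (the quadratic gap $1-|t+(1-t)u|^2=(1-t)\bigl[(1-|u|^2)+t|1-u|^2\bigr]\ge t(1-t)|1-u|^2$ does give an $O(\varepsilon^2)$ overshoot for $t\in(0,1)$, and with parameters chosen so that $\rho_{n+1}=O(\rho_n^2)$ the iteration converges), and you yourself note that this bookkeeping can be replaced by the citation to \cite{HOST}---which is precisely what the paper does.
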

\begin{proof}
Let $A$ be a uniform algebra. Put $P=\{\delta_x:x\ch(A)\}$, where $\delta_x$ is the point evaluation at $x\in \ch(A)$, the Choquet boundary. Then $P$ is a set of representatives for $\F_A$.
 It is known that a uniform algebra satisfies the condition of the Hausdorff distance \cite[Lemma 4.1]{HOST}.  By \cite[Lemma 6.3]{HOST} we have $M_{\delta_x,\alpha}=\{f\in S(A):f(x)=\alpha\}$ for every $x\in \ch(A)$ and $\alpha\in \bar\D$. Thus the conditions i) and ii) of Proposition \ref{prop15} holds for $A$. Thus $A$ satisfies the complex Mazur-Ulam property by Proposition \ref{prop15}.
 \end{proof}
\section{The case of a real Banach space}\label{sec5}
Throughout the section we denote $B_j$ a real Banach space, $P_j$ a set of representatives for $\F_{B_j}$ and $T:S(B_1)\to S(B_2)$ is a surjective isometry. 
We have by \eqref{-} that $\phi(p,1)=\phi(p,-1)$ for every $p\in P_1$. As $\T=\{\pm 1\}$ for real Banach spaces we have that $\phi(p,\lambda)$ does not depend the second term for real Banach spaces. We also have $\tau(p,-1)=-\tau(p,1)$ for every $p\in P_1$ by \eqref{-}. 
The situation is rather simple than the case of complex Banach spaces, and 
by \eqref{important} we have the following equation \eqref{rthen} without further assumption on $B_1$, i.e., 
\begin{equation}\label{rthen}
\phi(p)(T(a))=\tau(p,1)p(a)
\end{equation}
for every $p\in P_1$ and $a\in S(B_1)$ with $|p(a)|=1$.

\begin{prop}\label{prop2.2}
Suppose that 
\begin{equation}\label{mpar}
M_{p,\alpha}=\{a\in S(B_1):p(a)=\alpha\}
\end{equation}
for every $p\in P$ and $-1\le \alpha \le 1$. 
Then $T$ is extended to a surjective real-linear isometry form $B_1$ onto $B_2$.
Hence $B_1$ satisfies the Mazur-Ulam property.
\end{prop}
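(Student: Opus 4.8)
The plan is to adapt, in a simpler form, the argument of Proposition \ref{prop15}. By \eqref{rthen} we already know that $\phi(p)(T(a))=\tau(p,1)p(a)$ whenever $a\in S(B_1)$ satisfies $|p(a)|=1$, and the point is to remove that restriction using the hypothesis \eqref{mpar}. So I would fix $p\in P_1$ and $a\in S(B_1)$, and put $\alpha=p(a)$, so that $\alpha\in\bar\D$, i.e.\ $-1\le\alpha\le 1$. If $|\alpha|=1$ there is nothing to do. If $|\alpha|<1$, then by \eqref{mpar} we have $a\in M_{p,\alpha}$, hence $T(a)\in T(M_{p,\alpha})$, which equals $M_{\phi(p),\alpha\tau(p,1)}$ by Lemma \ref{mpalpha} (valid for all $\alpha\in\bar\D$, as its proof shows); since $|\alpha\tau(p,1)|=|\alpha|<1$, Lemma \ref{lemma} applied to $B_2$ gives $M_{\phi(p),\alpha\tau(p,1)}\subset\{b\in S(B_2):\phi(p)(b)=\alpha\tau(p,1)\}$, whence $\phi(p)(T(a))=\alpha\tau(p,1)=\tau(p,1)p(a)$. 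Combining the two cases,
\[
\phi(p)(T(a))=\tau(p,1)\,p(a),\qquad a\in S(B_1),\ p\in P_1.
\]

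Next I would pass to the homogeneous extension exactly as in Proposition \ref{prop15}. For $0\ne c\in B_1$, using $c/\|c\|\in S(B_1)$ and the real-linearity of $\phi(p)$ and of $p$,
\[
\phi(p)(\widetilde{T}(c))=\|c\|\,\phi(p)\!\left(T\!\left(\frac{c}{\|c\|}\right)\right)=\|c\|\,\tau(p,1)\,p\!\left(\frac{c}{\|c\|}\right)=\tau(p,1)\,p(c),
\]
which also holds for $c=0$. Consequently, for every $a,b\in B_1$ and every $r\in\R$,
\[
\phi(p)(\widetilde{T}(a+rb))=\tau(p,1)\,p(a+rb)=\tau(p,1)\,p(a)+r\,\tau(p,1)\,p(b)=\phi(p)\big(\widetilde{T}(a)+r\widetilde{T}(b)\big).
\]
Since $\Psi$ is a bijection we have $\phi(P_1)=P_2$, and $P_2$, being a set of representatives for $\F_{B_2}$, is a norming family for $B_2$; hence the last display forces $\widetilde{T}(a+rb)=\widetilde{T}(a)+r\widetilde{T}(b)$. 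As $\widetilde{T}$ is by construction a bijection from $B_1$ onto $B_2$ with $\|\widetilde{T}(a)\|=\|a\|$ for all $a\in B_1$, it is a surjective real-linear isometry extending $T$. Since $B_2$ was an arbitrary real Banach space, $B_1$ satisfies the Mazur-Ulam property.

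I do not expect a genuine obstacle here: all the analytic substance is already carried by the hypothesis \eqref{mpar} and by the previously established Lemmas \ref{mpalpha} and \ref{lemma}, and the real case avoids the complications present over $\C$ (the splitting into $P_1^+$ and $P_1^-$, the a priori dependence of $\phi$ on the second variable). The only mildly delicate point is the case distinction $|p(a)|=1$ versus $|p(a)|<1$: Lemma \ref{lemma} pins down the value $\phi(p)(T(a))$ only when the relevant parameter lies in $\mathbb{D}$, so the unimodular case must be handled separately, which is precisely what \eqref{rthen} does, and \eqref{rthen} is available for real spaces with no extra hypothesis. One should also record the trivial but necessary facts that $\tau(p,1)\in\{\pm1\}$ and $\phi(P_1)=P_2$, so that the norming property can be invoked at the end.
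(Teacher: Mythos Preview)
Your proposal is correct and follows essentially the same approach as the paper's own proof. The paper is slightly more terse: it does not separate the cases $|p(a)|=1$ and $|p(a)|<1$, and it cites only Lemma~\ref{mpalpha} to conclude $\phi(p)(T(a))=\alpha\tau(p,1)$, leaving the appeal to Lemma~\ref{lemma} (for $B_2$) implicit; your version spells these steps out, but the structure and ingredients are identical.
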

\begin{proof}
We first prove the equation \eqref{rthen} for every $p\in P_1$ and $a\in S(B_1)$ 
without assuming that $|p(a)|=1$.
Let $p\in P_1$ and $a\in S(B_1)$. 
Put $\alpha=p(a)$. Then by \eqref{mpar} $a\in M_{p,\alpha}$. 
We have by Lemma \ref{mpalpha} that 
\[
\phi(p)(T(a))=\alpha\tau(p,1)=\tau(p,1)p(a).
\]
It follows that for the homogeneous extension $\widetilde{T}$ of $T$ we have
\begin{multline*}
\phi(p)(\widetilde{T}(c))=\phi(p)\left(\|c\|T\left(\frac{c}{\|c\|}\right)\right)
\\
=\|c\|\tau(p,1)p\left(\frac{c}{\|c\|}\right)=\tau(p,1)p(c)
\end{multline*}
for every $0\ne c\in B_1$. As the equality $\phi(p)(\widetilde{T}(0))=\tau(p,1)p(0)$ holds, we obtain for $a,b\in B_1$ and a real number $r$ that
\[
\phi(p)(\widetilde{T}(a+rb)=\tau(p,1)p(a+rb)=\tau(p,1)p(a)+r\tau(p,1)p(b)
\]
and
\begin{multline*}
\phi(p)(\widetilde{T}(a)+r\widetilde{T}(b))=\phi(p)(\widetilde{T}(a))+r\phi(p)(\widetilde{T}(b))
\\
=\tau(p,1)p(a)+r\tau(p,1)p(b).
\end{multline*}
It follows that
\[
\phi(p)(\widetilde{T}(a+rb))=\phi(p)(\widetilde{T}(a)+r\widetilde{T}(b))
\]
for every $p\in P_1$, $a,b\in B_1$, and every real number $r$. As $\phi(P_1)=P_2$ is a norming family we see that $\widetilde{T}$ is real-linear on $B_1$. As the homogeneous extension is a norm-preserving bijection
  as is described in the first parat of section \ref{sec2} we complete the proof.
\end{proof}
\begin{definition}
For a locally compact Hausdorff space, we denote by $C_0(Y,{\mathbb{R}})$ a real Banach space of all real-valued continuous functions which vanish at infinity on $Y$. 
Let $E$ be a closed subspace of $C_0(Y,\mathbb{R})$ which separates the points of $Y$, that is, for any pair $y_1$ and $y_2$ of different points in $Y$ there exists a function $e\in E$ such that $e(y_1)\ne e(y_2)$.  In this paper we say that $E$ satisfies the condition $(r)$ if for any triple $y\in \ch(E)$, a neighborhood $V$, and $\varepsilon >0$ there exists $u\in E$ such that $0\le u\le 1=u(y)$ on $Y$ and $0\le u\le \varepsilon$ on $Y\setminus V$. 
\end{definition}
Note that if $E$ satisfies the condition $(r)$, then it is extremely $C$-regular (cf. \cite[Definition 2.3.9]{fj1}).

\begin{example}\label{er}
The space $C_0(Y,\R)$ satisfies the condition $(r)$ for any locally compact Hausdorff space $Y$. Let 
\[
E=\{f\in C_0((0,2],\mathbb{R}): \text{$f(t)=at$ on $(0,1]$ for some $a\in \R$}\}.
\]
Then $\ch(E)=[1,2]$ and $E$ satisfies the condition $(r)$.
\end{example}
Liu \cite[Corollary 6]{liu2007} established the Mazur-Ulam property for $C_0(Y,\R)$ when $Y$ is compact. The following generalizes the result of Liu.
\begin{cor}\label{cor2.3}
Let $Y$ be a locally compact Hausdorff space. 
Suppose that $E$ is a closed subspace of $C_0(Y,\R)$ which separates the  points of $Y$. Suppose that $E$ satisfies the condition $(r)$. Then $E$ satisfies the Mazur-Ulam property. In particular, $C_0(Y,\R)$ satisfies the Mazur-Ulam property.
\end{cor}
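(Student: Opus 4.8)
The plan is to derive the corollary from Proposition~\ref{prop2.2}: it suffices to exhibit a set of representatives $P$ for $\F_E$ satisfying the hypothesis \eqref{mpar}, and then to apply that proposition with $B_1=E$ and $B_2$ an arbitrary real Banach space. Since the condition $(r)$ makes $E$ extremely $C$-regular, for each $y\in\ch(E)$ the functional $\delta_y|_E$ is an extreme point of the closed unit ball of $E^*$ attaining its norm on $S(E)$ — already the condition $(r)$, applied to $y$ together with any neighborhood and any $\varepsilon>0$, furnishes $u\in E$ with $0\le u\le 1=u(y)$, so $\|u\|=1=u(y)$ — and conversely every member of $\F_E$ has the form $\{f\in S(E):f(y)=\lambda\}$ for a uniquely determined $y\in\ch(E)$ and $\lambda\in\{\pm1\}$; this is the standard description of the maximal convex subsets of the sphere of an extremely $C$-regular function space, and it uses that $E$ separates the points of $Y$, so that distinct points of $\ch(E)$ give inequivalent functionals (cf.\ \cite{fj1}). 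Hence $P:=\{\delta_y:y\in\ch(E)\}$ is a set of representatives for $\F_E$, and $F_{\delta_y,\lambda}=\{f\in S(E):f(y)=\lambda\}$ for $y\in\ch(E)$ and $\lambda\in\{\pm1\}$.

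Next one checks that $M_{\delta_y,\alpha}=\{f\in S(E):f(y)=\alpha\}$ for every $y\in\ch(E)$ and every $\alpha\in[-1,1]$. The inclusion $\subseteq$ is Lemma~\ref{lemma} when $|\alpha|<1$, and for $\alpha=\pm1$ it is immediate from the definition of $M_{\delta_y,\pm1}$ (the requirement $d(f,F_{\delta_y,\pm1})\le 0$ forces $f\in F_{\delta_y,\pm1}$, while $d(f,F_{\delta_y,\mp1})\le 2$ is automatic). For the reverse inclusion, note that $F_{\delta_y,-1}=-F_{\delta_y,1}$ and $d(f,-K)=d(-f,K)$, so it is enough to prove
\[
d\bigl(f,F_{\delta_y,1}\bigr)\le 1-f(y),\qquad f\in S(E),\ y\in\ch(E);
\]
applying this to $-f$ then yields $d(f,F_{\delta_y,-1})\le 1+f(y)$, and together these two estimates cover both inequalities entering the definition of $M_{\delta_y,\alpha}$.

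The proof of the displayed inequality is where the condition $(r)$ is used. Fix $f\in S(E)$ and $y\in\ch(E)$, write $\alpha=f(y)$ (one may assume $|\alpha|<1$), and let $\varepsilon>0$. Choose an open neighborhood $V$ of $y$ with $|f-\alpha|<\varepsilon$ on $V$, and use the condition $(r)$ to pick $u\in E$ with $0\le u\le 1=u(y)$ on $Y$ and $0\le u\le\varepsilon$ on $Y\setminus V$. Put $g_\varepsilon=f+(1-\alpha)u\in E$. Then $g_\varepsilon(y)=1$ and $\|f-g_\varepsilon\|=(1-\alpha)\|u\|=1-\alpha$, while splitting the estimate of $|g_\varepsilon|$ over $V$ and over $Y\setminus V$ gives $\|g_\varepsilon\|\le 1+2\varepsilon$. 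The only defect is that $g_\varepsilon$ need not lie on the unit sphere; but $h_\varepsilon:=g_\varepsilon/\|g_\varepsilon\|\in S(E)$ satisfies $\|f-h_\varepsilon\|\le 1-\alpha+2\varepsilon$ and $1-h_\varepsilon(y)=1-\|g_\varepsilon\|^{-1}\le 2\varepsilon$. Feeding $h_\varepsilon$ back into the same construction with a sufficiently rapidly decreasing sequence of parameters produces a Cauchy sequence in $S(E)$ whose limit $g^\ast$ satisfies $\|g^\ast\|=1$ and $g^\ast(y)=1$, hence $g^\ast\in F_{\delta_y,1}$, and $\|f-g^\ast\|\le 1-\alpha+O(\varepsilon)$. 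As $\varepsilon$ is arbitrary, $d(f,F_{\delta_y,1})\le 1-\alpha$, which finishes the verification of \eqref{mpar}. Proposition~\ref{prop2.2} then shows that $E$ has the Mazur-Ulam property; the space $C_0(Y,\R)$ is covered because it separates the points of $Y$ and satisfies the condition $(r)$ (Example~\ref{er}), with $\ch(C_0(Y,\R))=Y$, so the corollary recovers and extends Liu's theorem for compact $Y$.

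The main obstacle is precisely the reverse inclusion in \eqref{mpar}: the condition $(r)$ supplies a peaking function at $y$ but gives no control over its shape, so the natural candidate $f+(1-f(y))u$ overshoots the unit sphere by a controlled but nonzero amount, and one cannot in general land exactly inside $F_{\delta_y,1}$ in a single step. Either one replaces $f+(1-f(y))u$ by an $E$-approximant of the truncation $\min\{1,\,f+(1-f(y))u\}$ — the honest member of the ambient $C_0(Y,\R)$ that does the job, but which rarely belongs to $E$ — or, as sketched above, one renormalizes and iterates so that the successive errors telescope. Everything else — the passage through $\F_E$ and the Choquet boundary, the easy inclusion, the $\pm f$ symmetry, and the final linearity deduction via Proposition~\ref{prop2.2} — is routine.
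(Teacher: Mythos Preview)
Your overall strategy coincides with the paper's: take $P=\{\delta_y:y\in\ch(E)\}$ via the Arens--Kelley theorem, verify \eqref{mpar} (the forward inclusion by Lemma~\ref{lemma}, the boundary cases $\alpha=\pm1$ directly), and invoke Proposition~\ref{prop2.2}. The divergence is in the technical core, the reverse inclusion $\{a\in S(E):a(y)=\alpha\}\subset M_{\delta_y,\alpha}$.

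The paper builds, in a single stroke, a function $u=\sum_{n\ge1}2^{-n}u_n$ where each $u_n\in E$ peaks at $y$ and vanishes on a carefully chosen dyadic ``shell'' $F_0\cup F_n=\{q:|q(a)-\alpha|\ge\varepsilon/4\}\cup\{q:\varepsilon/2^{n+2}\le|q(a)-\alpha|\le\varepsilon/2^{n+1}\}$; the point of the layered construction is that for $q\in F_n$ one has $|q(u)|\le1-2^{-n}$, and this slack exactly absorbs the deviation $|q(a)-\alpha|$, so that $g_\pm=(\pm\alpha/|\alpha|-\alpha)u+a$ lands \emph{exactly} in $F_{\delta_y,\pm\alpha/|\alpha|}$ with $\|g_\pm-a\|=1\mp|\alpha|$. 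No iteration, no limiting argument, but an intricate one-shot design, and the two inequalities are treated separately.

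Your route is more elementary: one peaking function, an overshoot of at most $2\varepsilon$, renormalize, and iterate with geometrically decreasing parameters so the errors sum to $O(\varepsilon)$. The $f\mapsto-f$ symmetry halves the work. This is correct as sketched (the Cauchy estimate $\|f_n-f_{n+1}\|\le(1-f_n(y))+2\varepsilon_n\le2\varepsilon_{n-1}+2\varepsilon_n$ for $n\ge1$ telescopes), and it has the mild advantage of using condition~$(r)$ only as stated---your argument needs $u$ small outside $V$, whereas the paper's estimates use $u_n$ vanishing exactly on $F_0\cup F_n$. What you trade away is the explicit witness in $F_{\delta_y,1}$ at finite stage: the paper produces one, you obtain it only in the limit.
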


\begin{proof}
Let $\ch(E)$ be the Choquet boundary for $E$. Then  by the Arens-Kelley theorem $P=\{\delta_x: x\in \ch(E)\}$ is a set of representatives for $\F_E$. Suppose that $p\in P$ and $-1\le \alpha\le 1$.
 Proving the inclusion 
\[
M_{p,\alpha}\supset \{a\in S(E):p(a)=\alpha\},
\]
we get \eqref{mpar} by Lemma \ref{lemma}. It will follw by Proposition \ref{prop2.2} that $E$ satisfies the Mazur-Ulam property. Let $a\in S(B_1)$ such that $p(a)=\alpha$. 
First we consider the case of $\alpha=1$. 
By definition $a\in F_{p,1}$, hence $d(a, F_{p,1})=0=1-|\alpha|$.  For every $b\in F_{p,-1}$ we have $2=|p(a)-p(b)|\le \|a-b\|\le 2$. Hence $d(a,F_{p,-1})=2=1+|\alpha|$. Thus $a\in M_{p,\alpha}$ if $\alpha=1$. A proof for $\alpha=-1$ is similar and is omitted. 

We consider the case that $-1<\alpha<1$. Let $a\in S(B_1)$ such that $p(a)=\alpha$. Let $\varepsilon$ be $0<\varepsilon<1-|\alpha|$. Put $K=\{q\in P:q(a)=\alpha\}$  and $F_0=\{q\in P:|q(a)-\alpha|\ge\varepsilon/4\}$. For each positive integer $n$, put $F_n=\{q\in P: \varepsilon/2^{n+2}\le |q(a)-\alpha|\le \varepsilon/2^{n+1}\}$.  Then $P=K\cup\left(\cup_{m=0}^\infty F_m\right)$. For each positive integer $n$, choose $u_n\in S(B_1)$ such that $0\le u_n\le 1=p(u_n)$ on $Y$ and $u_n=0$ on $F_0\cup F_n$. Since $P=\{\delta_x:x\in \ch(E)\}$ and $E$ satisfies the condition $(r)$ such $u_n\in E$ exists. 
Put $u=\sum_{n=1}^\infty u_n/2^n$. 
As the supremum norm of $u_n$ is dominated by $1$ for every $n$, $u$ converges uniformly on $Y$ in $E$.
Then put
\[
g_+=\left(
\frac{\alpha}{|\alpha|}-\alpha\right)u+a,
\]
and
\[
g_-=\left(
-\frac{\alpha}{|\alpha|}-\alpha\right)u+a.
\]
We see that $g_+\in F_{p,\frac{\alpha}{|\alpha|}}$, $\|g_+-a\|=1-|\alpha|$, and  $d(a, F_{p,\frac{\alpha}{|\alpha|}})\le1-|\alpha|$. A proof is as follows.
All what is really needed is to prove that $|q(g_-)|\le 1$ for every $q\in P$ as
it is evident that $g_+\in E$ and  $p(g_+)=\frac{\alpha}{|\alpha|}$ by the definition of $g_+$. 
 If $q\in F_0$, then $q(u)=0$ asserts that $|q(g_+)|=|q(a)|\le 1$.
Suppose that $q\in F_n$ for some positive integer $n$. Then $q(u_n)=0$ and $0\le u_k\le 1$ on $Y$ we have $|q(u)|\le \sum_{k\ne n}1/2^k=1-1/2^n$. Thus  
\begin{align*}
|q(g_+)| & \le \left|\frac{\alpha}{|\alpha|}-\alpha\right|(1-1/2^n)+|q(a)-\alpha|+|\alpha|
\\
&\le (1-|\alpha|)(1-1/2^n)+\varepsilon/2^{n+1}+|\alpha|
\\
&\le (1-|\alpha|)(1-1/2^n)+(1-|\alpha|)/2^{n+1}+|\alpha|\le 1
\end{align*}
Suppose that $q\in K$. Then 
\[
|q(g_+)|\le 1-|\alpha|+|\alpha|=1.
\]
We conclude that $g_+\in F_{p,\frac{\alpha}{|\alpha|}}$. By the definition of $g_+$, we infere that 
$\|g_+-a\|=(1-|\alpha|)\|u\|=1-|\alpha|$.
It follows that 
\begin{equation}\label{a+}
d(a, F_{p,\frac{\alpha}{|\alpha|}})\le1-|\alpha|.
\end{equation}

We also see that $g_-\in F_{p,-\frac{\alpha}{|\alpha|}}$, $\|g_--a\|=1+|\alpha|$ and $d(a,F_{p,-\frac{\alpha}{|\alpha|}})\le1+|\alpha|$. A proof is as follows. 
All what is really needed is to prove that $|q(g_-)|\le 1$ for every $q\in P$ as
it is evident that $g_-\in E$ and $p(g_-)=-\frac{\alpha}{|\alpha|}$.
 If $q\in F_0$, then $q(u)=0$ asserts that $|q(g_+)|=|q(a)|\le 1$.
 Suppose that $q\in F_n$ for some positive integer $n$. 
 We have
 \[
 q(g_-)=\left(-\frac{\alpha}{|\alpha|}+\alpha\right)q(u)-2\alpha q(u)+\alpha +q(a)-\alpha,
 \]
 hence
\[
 |q(g_-)|\le (1-|\alpha|)|q(u)|+|\alpha||1-2q(u)|+|q(a)-\alpha|.
 \]
 As $0\le u\le1$ on $Y$ by the definition of $u$, we have $|1-2q(u)|\le 1$ for every $q\in P=\{\delta_x:x\in \ch(E)\}$. As we have shown that $|q(u)|\le 1-1/2^n$ for $q\in F_n$
 \begin{align*}
 |q(g_-)&|\le (1-|\alpha|)|q(u)|+|\alpha|+|q(a)-\alpha|
 \\
 & \le (1-|\alpha|)(1-1/2^n)+|\alpha|+\varepsilon/2^{n+1}
 \\
 & \le(1-|\alpha|)(1-1/2^n)+|\alpha|+(1-|\alpha|)/2^{n+1}\le 1.
 \end{align*}
Suppose that $q\in K$.
Then we have
\[
q(g_-)=\left(-\frac{\alpha}{|\alpha|}+\alpha\right)q(u)-2\alpha q(u)+\alpha.
\]
As $0\le u\le 1$ on $Y$ we have
\[
|q(g_-)|\le 1-|\alpha|+|\alpha||1-2q(u)|\le 1.
\]
We conclude that $g_-\in F_{p,-\frac{\alpha}{|\alpha|}}$. By the definition of $g_-$, we infer that 
$\|g_--a\|=(1+|\alpha|)\|u\|=1+|\alpha|$.
It follows that 
\begin{equation}\label{a-}
d(a, F_{p,-\frac{\alpha}{|\alpha|}})\le1+|\alpha|.
\end{equation}
By \eqref{a+} and \eqref{a-} we conclude that $a\in M_{p,\alpha}$.

Finally, as $C_0(Y,\R)$ satisfies the condition $(r)$ by Urysohn's lemma, we see that 
$C_0(Y,\R)$ satisfies the Mazur-Ulam property.
\end{proof}
\begin{example}\label{examp2.4}
Let $\ell^\infty(\Gamma,\R)$ be the real Banach space of all real-valued bounded functions on a discrete space $\Gamma$. Then there is a compact Hausdorff space $X$ such that $\ell^\infty(\Gamma,\R)$ is isometrically isomorphic to $C_0(X,\R)$ as a real Banach space. 
Therefore $\ell^\infty(\Gamma,\R)$ satisfies the Mazur-Ulam property  (cf. \cite{ding2007,liu2007}). The Mazur-Ulam property of the space $c_0(\Gamma,\R)$  is established by Ding\cite[Corollary 2]{ding2007}. As $c_0(\Gamma,\R)$ is isometrically isomorphic to $C_0(Y,\R)$ for a locally compact Hausdorff space $Y$, Corollary \ref{cor2.3} gives an alternative proof of the result of Ding.
\end{example}

\begin{example}\label{examp2.5}
Let $\mu$ be a positive measure on a $\sigma$-algebra $\Sigma$ of a subsets of a set $\Omega$. Let $L^\infty(\Omega, \Sigma,\mu, \R)$ be the usual real Banach space of all real-valued bounded measurable functions on $(\Omega,\Sigma,\mu)$, Then there exists a compact Hausdorff space $X$ such that $L^\infty(\Omega,\Sigma,\mu,\R)$ is isometrically isomorphic to $C_0(X,\R)$ as a real Banach space. Hence the space $L^\infty(\Omega,\Sigma,\mu,\R)$ satisfies the Mazur-Ulam property (cf. \cite[Corollary 6]{liu2007}). Note that 
Tan \cite[Theorem 2.5]{tan2011a} exhibits  the result for the case of the measure being $\sigma$-finite by an alternative proof.
\end{example}

\section{Remarks}\label{sec6}
We close the paper with a few remarks. 
In this paper we merely prove the complex Mazur-Ulam property for a uniform algebra. We conjecture that a uniform algebra, generally a closed subalgebra of $C_0(Y,\C)$, satisfies the Mazur-Ulam property. 

The  second remark concerns Tingley's problem on a Banach space of analytic functions. The main result of this paper concerns with the complex Mazur-Ulam property. We expect that several Banach space of analytic functions including the Hardy spaces satisfy the complex Mazur-Ulam property and the Mazur-Ulam property.

As a final remark we encourage researches on Tingley's problem on Banach algebras of continuous functions. Comparing with the theorem of Wang \cite{wang1996a} on the Banach algebra of $C^{(n)}(X)$, it is interesting to study a surjective isometry on the unit sphere of a Banach space or algebra of Lipschitz functions, it has already been pointed out by Cueto-Avellaneda \cite[Problem 4.0.8]{cueto}. 

\subsection*{Acknowledgments}
The author was supported by JSPS KAKENHI Grant Numbers JP19K03536.  

\end{document}